\newtheorem{theorem}{Theorem}[section]
\newtheorem{lem}[theorem]{Lemma}
\newtheorem{prop}[theorem]{Proposition}
\newtheorem{coro}[theorem]{Corollary}
\newcounter{NoMain}
\newtheorem{mainthm}[NoMain]{Theorem}
\theoremstyle{definition}
\newtheorem{definition}[theorem]{Definition}
\theoremstyle{remark}
\newtheorem{remark}[theorem]{Remark}
\numberwithin{equation}{section}
\newcommand{\R}{\mathbb{R}}
\newcommand{\N}{\mathbb{N}}
\newcommand{\Z}{\mathbb{Z}}
\newcommand{\C}{\mathbb{C}}
\newcommand{\U}{\mathcal{U}}
\newcommand{\K}{\mathbb{K}}
\newcommand{\fonction}[5]{#1: \begin{array}{ccc}
#2 & \rightarrow & #3 \\
 #4 & \longmapsto & #5 \end{array}}
 \newcommand{\fon}[4]{\begin{array}{ccc}
#1 & \rightarrow & #2 \\
#3 & \longmapsto & #4 \end{array}}
\DeclareMathOperator{\Ad}{Ad}
\DeclareMathOperator{\vspan}{span}
\DeclareMathOperator{\Id}{Id}
\DeclareMathOperator{\rank}{rank}
\DeclareMathOperator{\Diag}{Diag}
\begin{document}
\title{On quasi-homomorphism rigidity for lattices in simple algebraic groups}
\author{Guillaume Dumas\footnote{Université Claude Bernard Lyon 1, ICJ UMR5208, CNRS, Ecole Centrale de Lyon, INSA Lyon, Université Jean Monnet,
69622 Villeurbanne, France. \href{mailto:gdumas@math.univ-lyon1.fr}{gdumas@math.univ-lyon1.fr}}}

\maketitle
\begin{abstract}
    Property $(TTT)$ was introduced by Ozawa as a strengthening of Kazhdan's property $(T)$ and Burger and Monod's property $(TT)$. In this paper, we improve Ozawa's result by showing that any simple algebraic group of rank $\geq 2$ over a local field has property $(TTT)$. We also show that lattices in a second countable locally compact group inherits property $(TTT)$. Finally, we study to what extent Lie groups with infinite center fail to have properties $(TT)$ and $(TTT)$.
\end{abstract}

\section{Introduction}
\begin{definition}
    Let $G$ be a locally compact group and $H$ an Hilbert space. We say that a Borel locally bounded (i.e. bounded on compact subsets) map $b:G\mapsto H$ with a Borel map $\pi:G\mapsto \mathcal{U}\left(H\right)$ is a\begin{itemize}
        \item cocycle if $\pi$ is a representation and $\forall g,h\in G, b(gh)=b(g)+\pi(g)b(h)$;
        \item quasi-cocycle if $\pi$ is a representation and $\underset{g,h\in G}{\sup} \Vert b(gh)-b(g)-\pi(g)b(h)\Vert <+\infty$;
        \item wq-cocycle if $\underset{g,h\in G}{\sup} \Vert b(gh)-b(g)-\pi(g)b(h)\Vert <+\infty$.
    \end{itemize}
\end{definition}

It is know that $G$ has property $\left(T\right)$ if and only if every cocycle on $G$ is bounded. In \cite{burgermonod}, Burger and Monod introduced a strengthening of property $\left(T\right)$: $G$ has property $(TT)$ if every quasi-cocycle is bounded. In this article, we study a stronger property introduced by Ozawa (\cite{Ozawa+2011+89+104}).

\begin{definition}
    Let $G$ be a locally compact group, $A$ a subgroup of $G$. The pair $(G,A)$ has relative property $(TTT)$ if any wq-cocyle on $G$ is bounded on $A$.
\end{definition}

If $G$ has property $(TT)$, then all quasimorphisms $G\to \R$, that is to say maps $\varphi:G\to \R$ such that $\{\varphi(gh)(\varphi(g)\varphi(h))^{-1} \vert g,h\in G\}$ is relatively compact, are bounded. Property $(TTT)$ allows to study such questions for quasi-homomorphisms, when the target group is no longer $\R$ (see \cite[Thm. A]{Ozawa+2011+89+104}).\medskip

 Ozawa showed that for any local field $\K$, the group $SL_3(\K)$ has property $(TTT)$ as well as all its lattices (\cite[Thm. B]{Ozawa+2011+89+104}). However, it was not clear to him whether property $(TTT)$ passes to lattices that are not cocompact. We show that this is true.

 \begin{mainthm}\label{thm:latticesTTT}
     Let $G$ be a locally compact second countable group and $\Gamma$ a lattice in $G$. Then $G$ has property $(TTT)$ if and only if $\Gamma$ has property $(TTT)$.
 \end{mainthm}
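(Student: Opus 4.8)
\emph{Proof strategy.} Recall that a locally compact group $L$ has property $(TTT)$ exactly when every wq-cocycle on $L$ is bounded, i.e.\ when the pair $(L,L)$ has relative property $(TTT)$. The plan is to transport wq-cocycles between $\Gamma$ and $G$ in both directions, the argument resting throughout on the defining feature of a \emph{lattice}: the space $G/\Gamma$ carries a $G$-invariant \emph{probability} measure $\mu$ (note $G$ is unimodular, since it contains a lattice). Fix a Borel transversal $\Omega\subseteq G$ for $G/\Gamma$ with $1\in\Omega$, so $G=\bigsqcup_{\gamma\in\Gamma}\Omega\gamma$; write $g=p(g)c(g)$ with $p(g)\in\Omega$, $c(g)\in\Gamma$; transport $\mu$ to $\Omega$; and let $\alpha(g,\omega):=c(g\omega)\in\Gamma$ be the Mackey cocycle of the $G$-action $g\cdot\omega:=p(g\omega)$ on $\Omega\cong G/\Gamma$.

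\medskip
Suppose first that $G$ has $(TTT)$, and let $(b,\pi)$ be a wq-cocycle on $\Gamma$ with defect bound $D:=\sup_{g,h}\Vert b(gh)-b(g)-\pi(g)b(h)\Vert$. I would induce it to $\mathcal H:=L^2(\Omega,\mu;H)\cong L^2(G/\Gamma)\otimes H$ via
\[
(\tilde\pi(g)\xi)(\omega)=\pi\big(\alpha(g^{-1},\omega)^{-1}\big)\,\xi(g^{-1}\!\cdot\!\omega),\qquad
(\tilde b(g))(\omega)=b\big(\alpha(g^{-1},\omega)^{-1}\big),
\]
the inverse inside $\pi$ being used deliberately so that the following computation survives with $\pi$ merely Borel. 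Since $\mu$ is $G$-invariant, $\tilde\pi$ is a Borel map $G\to\mathcal U(\mathcal H)$ (a representation when $\pi$ is one), and the cocycle identity for $\alpha$ gives that $\big(\tilde b(gh)-\tilde b(g)-\tilde\pi(g)\tilde b(h)\big)(\omega)$ equals, for each $\omega$, a single value of the defect cocycle of $b$; its $\mathcal H$-norm is therefore $\leq D\,\mu(\Omega)^{1/2}=D$. Thus $(\tilde b,\tilde\pi)$ has bounded defect --- this is where finiteness of the covolume enters. Granting that $\tilde b$ is a genuine wq-cocycle on $G$ (Borel, locally bounded, $\mathcal H$-valued; see the obstacle below), property $(TTT)$ of $G$ forces $\tilde b$ to be bounded, and I would then deduce boundedness of $b$ by a Frobenius-reciprocity argument: the identity $\tilde b(\gamma)(\omega)=b(\alpha(\gamma^{-1},\omega)^{-1})$ holds for $\mu$-a.e.\ $\omega$ simultaneously over the countably many $\gamma\in\Gamma$, for such $\omega=\omega_0$ the map $\gamma\mapsto\alpha(\gamma^{-1},\omega_0)$ runs over $\Gamma$, and feeding this into the (bounded) coboundary structure of $\tilde b$ yields $b(\gamma)=\pi(\gamma)v-v+O(1)$ for a fixed vector $v$. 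Hence $b$ is bounded and $\Gamma$ has $(TTT)$.

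\medskip
Suppose now that $\Gamma$ has $(TTT)$ and let $(b,\pi)$ be a wq-cocycle on $G$. Its restriction $(b|_\Gamma,\pi|_\Gamma)$ is a wq-cocycle on $\Gamma$ (the defect only decreases), hence bounded by hypothesis, say $\Vert b(\gamma)\Vert\leq M$ for all $\gamma\in\Gamma$. Since $g=p(g)c(g)$ gives $\Vert b(g)\Vert\leq\Vert b(p(g))\Vert+M+D$, it suffices to bound $b$ on $\Omega$; when $\Gamma$ is cocompact one takes $\Omega$ relatively compact and concludes from local boundedness of $b$ (Ozawa's case). In general I would instead form the induced wq-cocycle $\widetilde{b|_\Gamma}$ on $\mathcal H$, which is now \emph{bounded} --- no integrability issue arises, since $\Vert\widetilde{b|_\Gamma}(g)\Vert^2=\int_\Omega\Vert b(c(g^{-1}\omega)^{-1})\Vert^2\,d\mu(\omega)\leq M^2$ uses only $\mu(\Omega)<\infty$ --- and compare it with $b$ via the $G$-equivariant isometric embedding $\pi\hookrightarrow\operatorname{Ind}_\Gamma^G\operatorname{Res}_\Gamma^G\pi\cong\pi\otimes L^2(G/\Gamma)$, $v\mapsto v\otimes\mathbf 1_{G/\Gamma}$ (legitimate because $\mathbf 1_{G/\Gamma}\in L^2(G/\Gamma)$). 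A direct computation --- a Shapiro-type identity for the lattice --- shows that under this embedding $b$ and $\widetilde{b|_\Gamma}$ differ by a uniformly bounded term; since $\widetilde{b|_\Gamma}$ is bounded, so is $b$, and $G$ has $(TTT)$.

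\medskip
I expect the main obstacle to be the parenthetical claim in the first implication: that the object $\tilde b$ induced from a possibly \emph{unbounded} wq-cocycle on $\Gamma$ is a bona fide wq-cocycle on $G$ --- in particular that $\tilde b(g)\in L^2(\Omega,\mu;H)$ for every $g$ and that $g\mapsto\Vert\tilde b(g)\Vert$ is bounded on compact sets (and, relatedly, that the pass-down above is legitimate). When $\Gamma$ is not cocompact, $\Omega$ cannot be chosen relatively compact, and for a distorted lattice $\Vert b(\gamma)\Vert$ may grow exponentially in the ambient length of $\gamma$, so neither property is automatic; this is exactly the point that left the non-cocompact case open in \cite{Ozawa+2011+89+104}. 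I would attack it by choosing $\Omega$ so that the Mackey cocycle $\omega\mapsto\alpha(g,\omega)$ is controlled on the ends of $G/\Gamma$ for $g$ in a fixed compact set, together with a truncation on $\Omega$ and the fact that $G/\Gamma$ has thin ends in a suitable quantitative sense (invoking, if needed, that the standing hypothesis forces $G$ and hence $\Gamma$ to have property $(T)$, which restricts the geometry of the lattice). An alternative would be to characterise $(TTT)$ through a bounded-cohomology-type invariant and appeal to Burger and Monod's isomorphism between the bounded cohomology of $G$ and of $\Gamma$, bypassing these regularity questions altogether. In any case, the remaining ingredients --- unitarity of $\tilde\pi$, preservation of bounded defect under induction, the cocycle projection formula, and the measurable-selection steps --- are routine once the setup is fixed.
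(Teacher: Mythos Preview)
Your proposal attacks the theorem head-on by inducing wq-cocycles along $\Gamma\hookrightarrow G$, and you correctly isolate the obstruction that kept the non-cocompact case open: for a distorted lattice the induced object $\tilde b(g)(\omega)=b(\alpha(g^{-1},\omega)^{-1})$ need not land in $L^2(\Omega,\mu;H)$, let alone define a locally bounded map. Your suggested remedies do not close this gap. Controlling the Mackey cocycle on the ends of $G/\Gamma$ \emph{is} the problem, and property $(T)$ of $\Gamma$ does not tame it: the word length of $\alpha(g,\omega)$ in $\Gamma$ can blow up as $\omega$ escapes to infinity, while $\Vert b\Vert$ is only subadditive up to the defect $D$, so no square-integrability follows. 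The Burger--Monod alternative is also off target: their isomorphism concerns continuous bounded cohomology with coefficients in genuine unitary \emph{representations}, i.e.\ quasi-cocycles and property $(TT)$; for $(TTT)$ the map $\pi$ is merely Borel with no multiplicativity whatsoever, and there is no bounded-cohomology framework for such objects. The same issue undermines your reverse direction: the Frobenius isomorphism and the ``$G$-equivariant'' embedding $v\mapsto v\otimes\mathbf 1_{G/\Gamma}$ both require $\pi$ to be a representation; with $\pi$ only Borel, the difference between $b(g)\otimes\mathbf 1$ and $\widetilde{b\vert_\Gamma}(g)$ unpacks into terms involving $b(\omega^{-1})$ and $b(p(g^{-1}\omega))$ with $\omega$ ranging over the unbounded set $\Omega$, so the claimed ``Shapiro-type identity'' does not yield a uniform bound either.

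The paper does not try to repair the induction argument at all. Instead it closes the loop in Ozawa's chain $(T_P)\Rightarrow(TTT)\Rightarrow(T_Q)$ by proving the missing implication $(T_Q)\Rightarrow(T_P)$ (Theorem~\ref{thm:equiv}), so that all three properties become equivalent. Since Ozawa had already shown that $(T_P)$ and $(T_Q)$ pass to and from lattices --- cocompact or not --- Theorem~\ref{thm:latticesTTT} is then an immediate corollary. The substantive work is therefore not about lattices: it is the proof of $(T_Q)\Rightarrow(T_P)$, carried out via a measurable Hilbert-space factorisation of the almost-invariant kernel (Proposition~\ref{prop:factorisation}) and a measurable-selection lemma (Lemma~\ref{lem:unitarymeas}) that manufactures a Borel almost-representation from the GNS data. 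This sidesteps the integrability obstacle entirely, because both $(T_P)$ and $(T_Q)$ are formulated with \emph{bounded} data (normalised kernels, unit vectors) from the outset.
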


Our main result is an extension of the result on $SL_n$ to higher rank simple algebraic groups.
\begin{mainthm}\label{thm:mainthm}
    Let $G$ be a connected simple algebraic group over a local field $\K$ with $\rank_\K G \geq 2$, then $G\left(\K\right)$ has property $(TTT)$.
\end{mainthm}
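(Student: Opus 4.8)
The plan is to run the standard higher-rank rigidity strategy — a relative property for a well-chosen family of subgroups, together with bounded generation — adapted from genuine cocycles to wq-cocycles in the spirit of Ozawa's argument for $SL_3$.

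First I would reduce to the case where $G$ is simply connected. A central $\K$-isogeny $\widetilde G\to G$ induces a continuous open homomorphism $\widetilde G(\K)\to G(\K)$ with finite (central) kernel and image of finite index — here it is used that $\K$ is local. A wq-cocycle on $G(\K)$, being locally bounded, is bounded as soon as it is bounded on that finite-index image (using finitely many coset representatives), and it is bounded on the image as soon as its pullback to $\widetilde G(\K)$ is; so it suffices to treat $\widetilde G(\K)$. Assuming $G$ simply connected, the Kneser--Tits theorem (valid over local fields) shows that $G(\K)$ is generated by the $\K$-points of the root subgroups $U_{(\alpha)}$, $\alpha\in{}_\K\Phi$, relative to a maximal $\K$-split torus $S$. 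Since the big Bruhat cell $\Omega$ is Zariski-open and $G$ is connected, $G(\K)=\Omega(\K)\cdot\Omega(\K)$, and $\Omega(\K)$ is a product of boundedly many of the groups $U_{(\alpha)}(\K)$, of copies of the rank-one subgroups $\langle U_{(\alpha)}(\K),U_{(-\alpha)}(\K)\rangle$, and of compact subgroups together with a single Weyl representative. The defect of a wq-cocycle being uniformly bounded, a wq-cocycle bounded on each factor of such a bounded product is bounded on the product; and a wq-cocycle is automatically bounded on compact subgroups. So it is enough to prove that $(G(\K),U_{(\alpha)}(\K))$ has relative property $(TTT)$ for every $\alpha$.

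The next step — where $\rank_\K G\ge2$ enters — places each root subgroup inside a suitable semidirect product. As $G$ is $\K$-simple, ${}_\K\Phi$ is irreducible of rank $\ge2$, so every root $\gamma$ is non-orthogonal to some root $\delta\ne\pm\gamma$. I would let $Q=\langle U_{(\delta)},U_{(-\delta)}\rangle$, a rank-one semisimple subgroup, and take $N$ to be the smallest $Q$-invariant unipotent subgroup of $G$ containing $U_{(\gamma)}$; then $N$ is nilpotent, $U_{(\gamma)}\subseteq N$, and $H:=Q\ltimes N$ is a closed subgroup of $G(\K)$. The key structural fact is that $N/[N,N]$ carries no nonzero $Q$-fixed vector: it is spanned by the images of the root spaces $\mathfrak u_\mu$ occurring in $N$, and any such $\mu$ fixed by $Q$ must differ from $\gamma$, hence enters $N$ only through a bracket of two smaller root spaces and so maps to $0$ in $N/[N,N]$. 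Since restricting a wq-cocycle on $G(\K)$ to $H$ gives a wq-cocycle on $H$, it now suffices to show that every pair $(H,N)=(Q\ltimes N,N)$ arising in this way has relative property $(TTT)$.

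This is the analytic core, and where I expect the main difficulty to lie. For $Q=SL_2(\K)$ and $N=\K^2$ with the standard action this relative statement is exactly what underlies Ozawa's treatment of $SL_3$: restrict a wq-cocycle to the abelian group $N$, regard it as an approximate cocycle for an approximate unitary representation of $N$, pass to the associated projection-valued measure on $\widehat N$, and use that the $Q$-action on $\widehat N\smallsetminus\{0\}$ carries no almost invariant mass accumulating at the origin — a manifestation of the non-amenability of $Q$ and of the absence of a $Q$-invariant probability measure on the relevant projective space. Inspecting the rank-two irreducible (relative) root systems and their $\K$-forms, the pairs $(H,N)$ produced above form a short list of low-dimensional semidirect products of a rank-one simple group with a nilpotent group that is abelian or two-step nilpotent: besides the basic case, one meets the abelian second symmetric power $\mathrm{Sym}^2$ of the standard module (for instance for the long roots of $B_2=C_2$ and of $F_4$), Heisenberg groups $\mathrm{Heis}_3(\K)$ on whose center $Q$ acts trivially (for the short roots), a few further low-dimensional nilpotent modules (notably for $G_2$), and the non-split analogues of all of these. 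The obstacle is to push the spectral estimate through for every such configuration: one must verify, case by case, the required spectral gap of the $Q$-action at the trivial character of $\widehat N$ — for the Heisenberg factors this brings in the Stone--von Neumann representations and the Weil representation of $Q$ — and this input is again furnished by the non-amenability of $Q$ together with the absence of invariant probability measures on the associated projective spaces. Granting the relative property for these pairs, the bounded-generation reduction of the first step yields property $(TTT)$ for $G(\K)$.
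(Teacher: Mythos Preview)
Your strategy is genuinely different from the paper's, and the contrast is instructive. The paper does not attempt to control each root subgroup. Instead it invokes the classical fact that any connected almost $\K$-simple group of $\K$-rank $\ge 2$ contains an almost $\K$-simple $\K$-subgroup $H$ whose simply connected cover is $SL_3$ or $Sp_4$; using Ozawa's result for $SL_3$ and the paper's own Theorem~\ref{thm:sp4} for $Sp_4$ (together with the isogeny stability of Lemma~\ref{lem:simplyconnected}), one gets that any wq-cocycle $b$ on $G(\K)$ is bounded on $H(\K)$. Then the paper observes that $\ell(g)=\Vert b(g)\Vert+C$ is a locally bounded sub-additive length, and applies de~Cornulier's dichotomy (every semigroup length on $G(\K)$ is bounded or proper) to conclude that $\ell$, being bounded on the non-compact $H(\K)$, is bounded on all of $G(\K)$. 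No bounded generation, no list of semidirect products, no case analysis.

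Your route is the ``classical'' one --- relative property for root subgroups plus bounded generation --- transported from $(T)$ to $(TTT)$. As written it has a real gap: the step you call the analytic core, namely relative $(TTT)$ for each pair $(Q\ltimes N,N)$, is not proved but explicitly \emph{granted}. You sketch a spectral argument and list the configurations (Heisenberg factors, Weil representation, the $G_2$ modules, non-split forms), but none of this is carried out; for a wq-cocycle, where $\pi$ is not a representation and there is no genuine spectral measure, these case-by-case verifications are not routine. In fact the gap can be closed without any of that machinery: Ozawa's Proposition~3 shows that for a closed \emph{normal} subgroup $N\le H$, relative property $(T)$ for $(H,N)$ already implies relative property $(T_P)$, hence relative $(TTT)$; since relative $(T)$ for all your pairs $(Q\ltimes N,N)$ is classical, this disposes of the entire list at once. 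With that fix your argument should go through, but it is considerably longer than the paper's, which bypasses both the enumeration of pairs and the bounded-generation bookkeeping by appealing to a single rank-$2$ subgroup and Cornulier's length dichotomy.
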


We follow the same idea as the classical proof of property $(T)$ for these group: we reduce the proof to the cases of the classical groups $SL_3$ and $Sp_4$. As said before, it is already known that $SL_3$ has property $(TTT)$. We show that for any local field $\K$, $Sp_4(\K)$ has property $(TTT)$ in Theorem \ref{thm:sp4}.\medskip

Finally, Theorem \ref{thm:mainthm} applies to higher rank simple Lie groups with finite center. But when $G$ has infinite center, it is well-known that $G$ has an unbounded quasi-morphism $\phi:G\to \R$ (see \cite[Prop. 6]{Barge1992}). In particular, $G$ does not have property $(TT)$ nor $(TTT)$. However, we show in Proposition \ref{prop:infinitecenter} that the unbounded wq-cocycles of $G$ are completely controlled by the unbounded wq-cocycles of its center.

\subsection*{Acknowledgements}
I am grateful to Mikael de la Salle for introducing me to this questions and for his involvement and helpful suggestions throughout the process of this work. I am thankful to Narutaka Ozawa who took time to listen and answer my questions about his work, and Yves de Cornulier for his explanations on the non-archimedean setting.

\section{Related properties}
\subsection{Positive definite kernels and completely bounded norm}
Let $G$ be a locally compact second countable group. A function $\theta\in L^\infty(G\times G)$ is a positive definite kernel if for any $\xi\in L^1(G)$, $\int \theta(x,y)\xi(x)\overline{\xi(y)}dxdy\geq 0$. Equivalently, $\theta$ is a positive definite kernel if and only if there exists a separable Hilbert space $H$ and a measurable map $F:G\to H$ such that $\theta(x,y)=\langle F(x),F(y)\rangle$ almost everywhere (see \cite[Appendix D]{brownc}). If $\theta$ is continuous, $F$ can be taken continuous and equality holds everywhere. We say that $\theta$ is normalized if $\theta(x,x)=1$ for any $x\in G$. In that case, there is an inequality that will be useful throughout the paper. Let $x,y,z\in G$, we have \begin{align*}
   \vert \theta(x,z)-\theta(y,z)\vert  & =  \vert\langle F(x)-F(y),F(z)\rangle\vert  \\
     & \leq  \Vert F(x)-F(y)\Vert\\
     & \leq  \left(\Vert F(x)\Vert^2+\Vert F(y)\Vert^2 - \langle F(x),F(y)\rangle- \langle F(y),F(x)\rangle\right)^{1/2}\\
     & \leq  \left( 2-\theta(x,y)-\overline{\theta(x,y)}\right)^{1/2}\\
     & \leq  \sqrt{2}\left\vert \theta(x,y)-1\right\vert^{1/2}.
\end{align*}

Let $\theta\in L^\infty(G\times G)$, we define the cb-norm of $\theta$ by $$\Vert \theta\Vert_{cb}=\inf \left\lbrace\Vert P\Vert \Vert Q\Vert : P,Q\in L^\infty(G;H), \theta(x,y)=\langle P(x),Q(y)\rangle\right\rbrace.$$

\subsection{Property \texorpdfstring{$(T_P)$}{(Tp)} and \texorpdfstring{$(T_Q)$}{(Tq)}}
Let $G$ be a locally compact second countable group and $A$ a subgroup of $G$.
\begin{definition}[\cite{Ozawa+2011+89+104}]
    The pair $(G,A)$ has relative property $(T_P)$ if $\forall \varepsilon>0$, $\exists\delta >0$ and $K\subset G$ compact such that for any $\theta:G\times G\mapsto \C$ Borel normalized positive definite kernel verifying \begin{equation}
        \label{eq:tp_h1} \underset{g\in G}{\sup} \left\Vert \theta(g\cdot,g\cdot)-\theta \right\Vert_{cb} < \delta
    \end{equation}and \begin{equation}
        \label{eq:tp_h2}\underset{g^{-1}h\in K}{\sup} \left\vert \theta(g,h)-1\right\vert < \delta
    \end{equation} 
    then \begin{equation}
        \label{eq:tp_c} \underset{x,y\in A}{\sup} \left\vert \theta(x,y)-1\right\vert < \varepsilon.
    \end{equation}
\end{definition}

\begin{remark}As noticed by Ozawa (\cite[Section 3]{Ozawa+2011+89+104}), it is enough to consider only continuous kernels instead of Borel kernels. Furthermore, the hypothesis \eqref{eq:tp_h2} can be weakened to \begin{equation}
        \label{eq:tp_hweak}
    \underset{x\in K}{\sup}\left\vert \theta(x,1)-1\right\vert < \delta.\end{equation}
    Indeed, if $\theta$ verifies \eqref{eq:tp_h1} and \eqref{eq:tp_hweak}, then for any $g,h\in G$ with $g^{-1}h\in K$, there is $x\in K$ such that $h=gx$ so \begin{align*}
        \left\vert \theta(g,h)-1\right\vert & =  \left\vert \theta(g,gx)-1\right\vert \\
         & \leq   \left\vert \theta(g,gx)-\theta(1,x)\right\vert +\left\vert \theta(x,1)-1\right\vert\\
         & \leq  2\delta.
    \end{align*}
\end{remark}

\begin{definition}
    The pair $(G,A)$ has relative property $(T_Q)$ if $\forall \varepsilon>0$, $\exists\delta >0$ and $K\subset G$ compact such that for any Borel map $\pi:G\mapsto \U(H)$ and every unit vector $\xi\in H$ verifying \begin{equation}
        \label{eq:tq_h1} \underset{g,h\in G}{\sup} \left\Vert \pi(gh)\xi-\pi(g)\pi(h)\xi\right\Vert<\delta
    \end{equation}and \begin{equation}
        \label{eq:tq_h2} \underset{g\in K}{\sup} \left\Vert \pi(g)\xi-\xi\right\Vert < \delta
    \end{equation}
    then \begin{equation}
        \label{eq:tq_c} \underset{x\in A}{\sup} \left\Vert \pi(x)\xi-\xi\right\Vert < \varepsilon.
    \end{equation}
\end{definition}

\subsection{Measurable factorisation}
Let $X$ be a $\sigma$-finite measure space such that $L^2\left(X\right)$ is a separable Hilbert space, for example $X$ a locally compact second countable group. Then $L^1(X)$ is also separable. If $E$ is a separable Banach space, a function $\phi:X\to E^*$ is $w^*$-measurable if $x\mapsto \langle \phi(x),v\rangle$ is measurable for any $v\in E$. Since $E$ is separable, let $(x_n)$ be a dense sequence in the unit sphere of $E$. Then $\Vert \phi(\cdot)\Vert= \underset{n\in \N}{\sup} \vert \varphi(\cdot)(x_n)\vert$ is measurable, as the supremum of measurable functions. Thus, we can define $L^p_\sigma(X;E^*)$ as the space of $w^*$-measurable functions $\phi:X\to E^*$ such that $$\Vert \phi \Vert_p=\Vert \Vert \phi(.)\Vert \Vert_p<+\infty$$(see \cite{diestel1977vector} for more details). By Pettis mesurability theorem (\cite[Ch. II, Thm. 2]{diestel1977vector}), if $E^*$ is separable and $\phi:X\to E^*$ is such that $x\mapsto u(\phi(x))$ is measurable for any $u\in E^{**}$, then $\phi$ is Bochner measurable. This implies that when $E$ is a separable reflexive Banach space, the space $L^p_\sigma(X;E^*)$ coincides with the space $L^p(X;E^*)$ of (Bochner) measurable functions. This holds more generally when $E^*$ has the Radon-Nikodym property (see \cite[Ch. IV]{diestel1977vector}).\medskip

Let $E,F$ be two Banach spaces, we denote $E\hat{\otimes}F$ the completion of $E\times F$ for the projective tensor norm (see \cite[Ch. VIII]{diestel1977vector}). When $E,F$ are separable, this is a separable Banach space. By \cite[Ch. VIII.2, Coro. 2]{diestel1977vector}, there is an isometric isomorphisms \begin{equation}
    \label{eq:predualB}
\left(E\hat{\otimes}F\right)^*\simeq B\left(E,F^*\right)\end{equation} and $\phi:E\hat{\otimes}F\to \C$ corresponds to the unique bounded operator $u:E\to F^*$ such that $\forall x,y\in E\times F$, $\phi(x\otimes y)=u(x)(y)$. Thus, we can define the spaces $L^\infty_\sigma\left(X;B\left(E,F^*\right)\right)$ when $E,F$ are separable Banach spaces.\medskip

Let $E$ be a Banach space. By \cite[Ch. VIII.1, Ex.10]{diestel1977vector}, the natural embedding $L^1(X)\otimes E\to L^1(X;E)$ extends to an isometric isomorphism \begin{equation}
    \label{eq:bochnertensor} L^1(X)\hat{\otimes} E \simeq L^1(X;E).
\end{equation}

Furthermore, if $E$ is separable, the map \begin{equation}
    \label{eq:dualL1} \fon{L^\infty_\sigma(X;E^*)}{L^1(X;E)^*}{\xi}{u\mapsto \int_X  \left[\xi(x)\right](u(x)) dx}
\end{equation}is an isometric isomorphism (see \cite[Thm. 1.16]{coine-these} or \cite[Prop. 2.20, 2.26 and Thm. 2.29]{Pisier_2016}).\medskip

Let $H$ be a separable Hilbert space. Combining \eqref{eq:bochnertensor} and \eqref{eq:dualL1}, a function in $L^\infty(X;H)=L^\infty_\sigma(X;H^*)$ corresponds to a functional $\phi$ on $L^1(X)\hat{\otimes} H$, which is defined on simple tensors $u\otimes y\in L^1(X)\otimes H$ by $$\phi(u\otimes y)=\int_X u(x)\langle \xi(x),y\rangle dx.$$ Thus by \eqref{eq:predualB}, the map \begin{equation}
    \label{eq:isometric-integration}\fonction{T}{L^\infty(X;H)}{B(L^1(X),H)}{\xi}{u\mapsto \int_X u(x)\xi(x)dx}
\end{equation}is an isometric isomorphism.\medskip

Let $E,F$ be two separable Banach spaces. The above properties give isometric isomorphisms 
\begin{align*}
    L_\sigma^\infty\left(X;B(E,F^*)\right) & \simeq   L_\sigma^\infty\left(X;(E\hat{\otimes}F)^*\right) & \textrm{ by }\eqref{eq:predualB} \\
     & \simeq  L^1\left(X;E\hat{\otimes}F\right)^* & \textrm{ by }\eqref{eq:dualL1} \\
     & \simeq  \left(L^1(X)\hat{\otimes}(E\hat{\otimes}F) \right)^*  & \textrm{ by }\eqref{eq:bochnertensor} \\
     & \simeq \left(E \hat{\otimes} L^1(X) \hat{\otimes} F\right)^* &\\
     & \simeq  \left(E \hat{\otimes} L^1(X;F)\right)^* & \textrm{ by }\eqref{eq:bochnertensor}\\
     & \simeq  B\left(E,L^1(X,F)^*\right) & \textrm{ by }\eqref{eq:predualB} \\
     & \simeq B\left(E,L^\infty_\sigma\left(X;F^*\right)\right) & \textrm{ by }\eqref{eq:dualL1} \\
\end{align*}and following the path of isomorphism shows that \begin{equation}
    \label{eq:evaluationweakmes}\fon{L_\sigma^\infty\left(X;B\left(E,F^*\right)\right)}{B\left(E,L^\infty_\sigma\left(X;F^*\right)\right)}{\xi}{u\mapsto \xi(\cdot)(u)}.
\end{equation}

 Let $$\Gamma_2\left(L^1(X),L^\infty(X)\right)=\left\lbrace T\in B(L^1(X),L^\infty(X)) \left\vert \begin{aligned}[c]
     T=SR \textrm{ where }
     R\in B(L^1(X),H),\\
     S\in B(H,L^\infty(X)) \textrm{ for some }\\\textrm{ separable Hilbert space }H
 \end{aligned}\right.\right\rbrace$$with norm $\gamma(T)=\inf \Vert S\Vert \Vert R\Vert$. Let $z\in L^1(X)\otimes L^1(X)$, we define $$\Vert z \Vert_*=\inf \left( \sum \Vert u_i\Vert^2 \right)^{1/2}\left(\sum  \Vert v_i\Vert^2\right)^{1/2}$$where the infimum runs over all finite families $(u_i),(v_i)$ such that for $\xi,\eta\in (L^1(X))^*$, $$\vert (\xi\otimes \eta)(z)\vert \leq \left( \sum \vert  \xi(u_i)\vert^2 \right)^{1/2}\left(\sum \vert  \eta(v_i)\vert^2\right)^{1/2}.$$Then, $\Vert \cdot \Vert_*$ is a norm on $L^1(X)\otimes L^1(X)$. By \cite[Thm. 2.8 and Coro. 2.9]{pisier1986factorization}, there is an isometric isomorphism \begin{equation}
     \label{eq:predualGamma} \Gamma_2\left(L^1(X),L^\infty(X)\right)\simeq \left(L^1(X)\otimes_* L^1(X)\right)^*
 \end{equation}where $L^1(X)\otimes_* L^1(X)$ is the completion of the tensor product $L^1(X)\otimes L^1(X)$ for the norm $\Vert \cdot \Vert_*$. Thus, this space has a separable predual and we can consider the spaces $L^\infty_\sigma\left(Y;\Gamma_2\left(L^1(X),L^\infty(X)\right)\right)$.\medskip

 If $\varphi\in L^\infty(X\times X)$, we can define $r_\varphi\in B\left(L^1(X),L^\infty(X)\right)$ by $$r_\varphi(f)(s)=\int_X f(t)\varphi(t,s)dt.$$
 By \cite[Thm. 3.3]{spronk}, $\varphi$ is a Schur multiplier if and only if $r_\varphi\in \Gamma_2\left(L^1(X),L^\infty(X)\right)$, and in that case, $\Vert \varphi\Vert_{cb}=\gamma(r_\varphi)$.\medskip

 Let $\phi\in L^\infty(X\times X\times X)$ and denote $\phi_x=\phi(\cdot,x,\cdot)$. Such a map defines an operator $\Tilde{\phi}\in L^\infty_\sigma\left(X;B\left(L^1(X),L^\infty(X)\right)\right)$ by $$\Tilde{\phi}(x)(u)=\int_X \phi(t,x,\cdot)u(t)dt=r_{\phi_x}(u).$$
\begin{prop}\label{prop:factorisation}
    Let $G$ be a locally compact second countable group. Let $\theta\in L^{\infty}(G\times G)$ be a positive definite kernel on $G$ such that for any $g$, $\Vert g\theta-\theta\Vert_{cb}\leq \delta$. Denote $\phi(x,g,y)=\theta(gx,gy)-\theta(x,y)$. Then there exists a separable Hilbert space $H$ and two functions $a,b\in L^\infty_\sigma\left(G;B\left(L^1(G),H\right)\right)$ such that for almost every $g\in G$ and for every $u,v\in L^1(G)$, $$\left[\Tilde{\phi}(g)(u)\right](v)=\langle a(g)(u),b(g)(u)\rangle$$ with $\Vert a\Vert_\infty \Vert b\Vert_\infty\leq \delta$.
\end{prop}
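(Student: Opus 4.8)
The plan is to use Spronk's theorem fibrewise to obtain, for each $g$, a Hilbert-space factorisation of $\phi_g:=\phi(\cdot,g,\cdot)=g\theta-\theta$, then to reorganise this field of operators into a single operator with the duality machinery of Section~2.3, factor that operator once via Pisier's theorem, and disintegrate the factorisation back over $g$.

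First I would record that for each $g$ one has $\|\phi_g\|_{cb}\le\delta$ by hypothesis, so by \cite[Thm.~3.3]{spronk} the operator $\Tilde\phi(g)=r_{\phi_g}$ lies in $\Gamma_2(L^1(G),L^\infty(G))$ with $\gamma(\Tilde\phi(g))=\|\phi_g\|_{cb}\le\delta$. Since $\Gamma_2(L^1(G),L^\infty(G))\simeq(L^1(G)\otimes_*L^1(G))^*$ has separable predual, and the pairing of $\Tilde\phi(g)$ with an elementary tensor $f\otimes h$ equals $[\Tilde\phi(g)(f)](h)$, which is measurable in $g$ because $\Tilde\phi\in L^\infty_\sigma(G;B(L^1(G),L^\infty(G)))$, a density argument --- extending measurability from $L^1(G)\otimes L^1(G)$ to its completion $L^1(G)\otimes_* L^1(G)$ using the uniform bound $\gamma(\Tilde\phi(g))\le\delta$ --- shows $\Tilde\phi\in L^\infty_\sigma\bigl(G;\Gamma_2(L^1(G),L^\infty(G))\bigr)$ with $\|\Tilde\phi\|_\infty\le\delta$.

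Next I would reorganise this field as a single operator by running the chain of isometric identifications of Section~2.3 with $\Gamma_2$ in place of $B$: applying \eqref{eq:predualGamma}, then \eqref{eq:dualL1} and \eqref{eq:bochnertensor} (valid since $L^1(G)\otimes_*L^1(G)$ is separable), then the tensor-norm commutation $L^1(G)\hat\otimes\bigl(L^1(G)\otimes_*L^1(G)\bigr)\simeq L^1(G)\otimes_*L^1(G\times G)$, and then \eqref{eq:predualGamma} once more, one gets an isometric isomorphism $L^\infty_\sigma\bigl(G;\Gamma_2(L^1(G),L^\infty(G))\bigr)\simeq\Gamma_2\bigl(L^1(G),L^\infty(G\times G)\bigr)$ which, via \eqref{eq:evaluationweakmes}, is compatible with the inclusions of both sides into $B\bigl(L^1(G),L^\infty(G\times G)\bigr)$. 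Under it $\Tilde\phi$ corresponds to the operator $\Psi\colon L^1(G)\to L^\infty(G\times G)$, $(\Psi u)(g,y)=\int_G\phi(t,g,y)u(t)\,dt$, with $\gamma(\Psi)=\|\Tilde\phi\|_\infty\le\delta$. I expect this to be the main obstacle: one must verify the tensor-norm commutation just used, and check that the duality \eqref{eq:predualGamma}, quoted from \cite{pisier1986factorization} for $\Gamma_2(L^1(X),L^\infty(X))$, persists for $\Gamma_2(L^1(\mu),L^\infty(\nu))$ with the two distinct measure spaces occurring here.

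Finally I would factor and disintegrate. As the infimum defining $\gamma$ is attained (standard), write $\Psi=S\circ R$ with $R\in B(L^1(G),H)$, $S\in B(H,L^\infty(G\times G))$ and $\|S\|\,\|R\|\le\delta$; after replacing $H$ by $\overline{R(L^1(G))}$ we may assume $H$ separable. Viewing $L^\infty(G\times G)=L^\infty_\sigma(G;L^\infty(G))$ and applying \eqref{eq:evaluationweakmes} to $S$ gives a measurable field $g\mapsto S_g\in B(H,L^\infty(G))$ with $\operatorname{ess\,sup}_g\|S_g\|=\|S\|$ and $\Tilde\phi(g)=r_{\phi_g}=S_g\circ R$ for a.e.\ $g$. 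Set $a(g):=R$, a constant field, and let $b(g)\in B(L^1(G),H)$ be the operator corresponding to $S_g$ under the transposition isomorphism $B(H,L^1(G)^*)\simeq B(L^1(G),H^*)\simeq B(L^1(G),H)$, so that $[S_g h](v)=\langle h,b(g)v\rangle$ for all $h\in H$, $\|b(g)\|=\|S_g\|$, and $g\mapsto b(g)$ is $w^*$-measurable. Then $a,b\in L^\infty_\sigma(G;B(L^1(G),H))$ with $\|a\|_\infty\|b\|_\infty=\|R\|\operatorname{ess\,sup}_g\|S_g\|=\|R\|\,\|S\|\le\delta$, and for a.e.\ $g$ and all $u,v\in L^1(G)$
\[
[\Tilde\phi(g)(u)](v)=[S_g(Ru)](v)=\langle Ru,\,b(g)v\rangle=\langle a(g)(u),\,b(g)(v)\rangle,
\]
which is the desired factorisation.
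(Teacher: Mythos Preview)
Your opening step --- showing that each $\phi_g$ is a Schur multiplier with $\gamma(r_{\phi_g})=\|\phi_g\|_{cb}\le\delta$ and concluding that $\Tilde\phi\in L^\infty_\sigma\bigl(G;\Gamma_2(L^1(G),L^\infty(G))\bigr)$ with $\|\Tilde\phi\|_{\infty,\Gamma_2}\le\delta$ --- is exactly what the paper does. At that point, however, the paper simply invokes \cite[Thm.~5.1]{coine} as a black box to obtain the measurable factorisation $a,b$, whereas you attempt to prove that result inline.

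Your outline for doing so is reasonable and the final disintegration step is clean, but the step you flag as ``the main obstacle'' is a genuine gap, not a formality. The identification
\[
L^1(G)\ \hat\otimes\ \bigl(L^1(G)\otimes_*L^1(G)\bigr)\ \simeq\ L^1(G)\otimes_*L^1(G\times G)
\]
is precisely the substance of the measurable-factorisation theorem you are trying to establish: one inequality is easy (it corresponds to the direction ``global factorisation $\Rightarrow$ fibrewise factorisation'', which is your third paragraph), but the reverse inequality --- that a bounded measurable field of $\Gamma_2$-operators can be assembled into a single $\Gamma_2$-operator with the same norm --- is exactly what Coine's theorem supplies and does not drop out of abstract tensor-product manipulations. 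Your second flagged point (extending the predual identification \eqref{eq:predualGamma} from $\Gamma_2(L^1(X),L^\infty(X))$ to $\Gamma_2(L^1(\mu),L^\infty(\nu))$) is by contrast routine: Pisier's argument in \cite{pisier1986factorization} is stated for general Banach spaces $E,F^*$.

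In short: your proof coincides with the paper's up to the citation of \cite[Thm.~5.1]{coine}, and what remains in your write-up is a sketch of (a route to) that theorem with its hard step correctly identified but not carried out.
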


\begin{proof}
    Since $\phi_g$ is a Schur multiplier for any $g\in G$, we have $$\Tilde{\phi}\in L^\infty_\sigma\left(G;\Gamma_2\left(L^1(G),L^\infty(G)\right)\right)$$with $\Vert \Tilde{\phi}\Vert_{\infty,\Gamma_2}=\underset{g\in G}{\sup} \gamma(r_{\psi_{g}})\leq \delta$.
   The result is then a direct consequence of \cite[Thm 5.1]{coine}.\qedhere

\end{proof}

\begin{lem}\label{lem:unitarymeas}
 Let $H$ be a separable Hilbert space, $X,Y$ measured spaces such that $L^2(X),L^2(Y)$ are separable and $Y$ is complete. Let $\alpha,\beta\in L^\infty_\sigma\left(Y;B\left(L^1(X),H\right)\right)$ be two maps such that for almost every $y\in Y$ and every $u,v\in L^1(X)$, \begin{equation}
     \label{eq:hypogeom}\langle \alpha(y)(u),\alpha(y)(v)\rangle=\langle \beta(y)(y),\beta(y)(v)\rangle.
 \end{equation}Then there exists a map $\pi:Y\to \mathcal{U}\left(H\oplus \ell^2(\N)\right)$ which is measurable when the group $\mathcal{U}(H\oplus \ell^2(\N))$ is endowed with the Borel $\sigma$-algebra coming from the strong operator topology, such that for almost all $y\in Y$, for all $u\in L^1(X)$, $U_y\left(\alpha(y)(u)\right)=\beta(y)(u)$.
\end{lem}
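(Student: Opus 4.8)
The plan is to build $\pi(y)$ by matching $\overline{\Ima\alpha(y)}$ with $\overline{\Ima\beta(y)}$ through the tautological isometry coming from \eqref{eq:hypogeom}, and then matching their orthogonal complements in $\mathcal H:=H\oplus\ell^2(\N)$; the extra $\ell^2(\N)$ is present precisely so that both complements are always infinite-dimensional — hence unitarily isomorphic — even though $\dim\overline{\Ima\alpha(y)}$ may vary with $y$. After discarding a null set where \eqref{eq:hypogeom} fails or where $\|\alpha(y)\|,\|\beta(y)\|$ exceed $\|\alpha\|_\infty,\|\beta\|_\infty$ (harmless, $Y$ being complete and the conclusion only \textit{a.e.}; put $\pi(y)=\Id$ off the good set), the pointwise picture is: \eqref{eq:hypogeom} makes $\alpha(y)(u)\mapsto\beta(y)(u)$ extend to a surjective isometry $V_y^{0}:\overline{\Ima\alpha(y)}\to\overline{\Ima\beta(y)}$; writing $N_\alpha(y),N_\beta(y)$ for the orthogonal complements of these spaces \emph{in $\mathcal H$}, both contain the summand $0\oplus\ell^2(\N)$, so there is a unitary $W_y:N_\alpha(y)\to N_\beta(y)$, and $\pi(y):=V_y^{0}\oplus W_y$ (equivalently, the sum of the zero-extensions of $V_y^0$ and $W_y$ to $\mathcal H$) is unitary with $\pi(y)\alpha(y)(u)=\beta(y)(u)$. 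Everything now reduces to carrying this out measurably in $y$.

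For that I would encode $\alpha,\beta$ into operators: fix a dense sequence $(u_n)$ in $L^1(X)$ and $c_n>0$ with $\sum_nc_n^2\|u_n\|^2<\infty$, and set $A(y),B(y)\in B(\ell^2(\N),H)$, $A(y)e_n=c_n\alpha(y)(u_n)$, $B(y)e_n=c_n\beta(y)(u_n)$. These are uniformly bounded, measurable in $y$ (weakly measurable, hence Bochner measurable since $H$ is separable), have closed ranges $\overline{\Ima\alpha(y)}$ and $\overline{\Ima\beta(y)}$, and satisfy $A(y)^{*}A(y)=B(y)^{*}B(y)$ — exactly \eqref{eq:hypogeom} tested on the $u_n$. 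With polar decompositions $A(y)=U_A(y)|A(y)|$, $B(y)=U_B(y)|B(y)|$ and $|A(y)|=|B(y)|$, one checks $V_y^{0}=U_B(y)U_A(y)^{*}$ and $V_y^{0}\alpha(y)(u)=\beta(y)(u)$ for all $u$ by density. Since the partial isometry of a polar decomposition is the SOT-limit of $A(y)(A(y)^{*}A(y)+1/k)^{-1/2}$ and continuous functional calculus preserves strong measurability, $y\mapsto U_A(y),U_B(y),V_y^{0}$ and the orthogonal projections $\hat P_\alpha(y)=U_A(y)U_A(y)^{*}$, $\hat P_\beta(y)=U_B(y)U_B(y)^{*}$ of $\mathcal H$ onto $\overline{\Ima\alpha(y)}$, $\overline{\Ima\beta(y)}$ are all SOT-measurable.

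The main obstacle is the measurable choice of $W_y$, i.e. of a measurable field of unitaries between the measurable fields of infinite-dimensional subspaces $N_\alpha(y)=\ker\hat P_\alpha(y)$ and $N_\beta(y)=\ker\hat P_\beta(y)$, whose dimensions both equal $\dim\ell^2(\N)$ but whose description is $y$-dependent. I would first establish the auxiliary statement: a measurable field $y\mapsto R(y)$ of orthogonal projections of a fixed separable Hilbert space onto \emph{infinite-dimensional} subspaces admits a measurable orthonormal basis $(f_n(y))_{n\ge1}$ of its ranges. To prove it, fix an orthonormal basis $(e_k)$ and run Gram–Schmidt on $(R(y)e_k)_k$, keeping $v_k(y)=R(y)e_k-P_{\vspan\{v_1(y),\dots,v_{k-1}(y)\}}R(y)e_k$ — measurable, because the projection onto the closed linear span of finitely many measurable vector fields is the support projection of their Gram operator, hence measurable — and then re-index measurably: the index $k_n(y)$ of the $n$-th nonzero $v_k(y)$ is a measurable function of $y$ (its level sets are Boolean combinations of the sets $\{v_k(y)\neq0\}$), infiniteness of $R(y)\mathcal H$ guaranteeing $k_n(y)$ is always defined, and $f_n(y)=v_{k_n(y)}(y)/\|v_{k_n(y)}(y)\|$ is the desired basis. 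Applying this to $Q_\alpha:=\Id-\hat P_\alpha$ and $Q_\beta:=\Id-\hat P_\beta$ yields measurable orthonormal bases $(f_n(y))$ of $N_\alpha(y)$ and $(g_n(y))$ of $N_\beta(y)$; then $W_y\zeta:=\sum_n\langle\zeta,f_n(y)\rangle g_n(y)$ is an SOT-measurable partial isometry with initial projection $Q_\alpha(y)$ and final projection $Q_\beta(y)$, and $\pi(y):=V_y^{0}+W_y$ (the two summands having orthogonal initial and orthogonal final spaces) is a unitary, SOT-measurable in $y$, with $\pi(y)\alpha(y)(u)=\beta(y)(u)$ for a.e. $y$ and all $u$ — and SOT-measurability of $y\mapsto\pi(y)$ is exactly Borel measurability into $\mathcal U(\mathcal H)$ equipped with the strong-operator Borel structure.

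Alternatively, one can bypass the Gram–Schmidt argument altogether: the set $\{(y,U)\in Y\times\mathcal U(\mathcal H)\ :\ U\alpha(y)(u_n)=\beta(y)(u_n)\text{ for all }n\}$ has measurable graph and non-empty closed $y$-sections (non-emptiness being the pointwise construction of the first paragraph, and the sections closed because $\{U:Uv=w\}$ is SOT-closed), so the von Neumann measurable selection theorem — applicable since $Y$ is complete — produces a measurable $y\mapsto\pi(y)$ with the required property.
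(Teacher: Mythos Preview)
Your proof is correct. Both your main route and the paper's rely on Gram--Schmidt to produce measurable orthonormal frames, but the execution differs in two places. For the isometry $V_y^0$ between the ranges, the paper runs Gram--Schmidt on $(\alpha(y)(u_n))_n$ and on $(\beta(y)(u_n))_n$ in parallel and observes that hypothesis~\eqref{eq:hypogeom} forces the resulting families $(a_n(y))$, $(b_n(y))$ to have identical zero patterns and identical coefficient decompositions, so that $a_n(y)\mapsto b_n(y)$ is well-defined; your polar-decomposition formula $V_y^0=U_B(y)U_A(y)^*$ is cleaner and sidesteps this bookkeeping. For the orthogonal complements, the paper avoids the re-indexing step by manufacturing a total family $(g_n)$ in $H\oplus\ell^2(\N)$ with $\vspan(g_n)\cap H=\{0\}$, so that the projected vectors $Q_\alpha(y)g_n$ are automatically linearly independent and Gram--Schmidt never outputs a zero; your re-indexing via the measurable functions $k_n(y)$ is more pedestrian but equally valid, infinite-dimensionality of $N_\alpha(y),N_\beta(y)$ guaranteeing that each $k_n(y)$ is defined. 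Your alternative via the von~Neumann selection theorem is a genuinely shorter path the paper does not take; it trades the explicit construction for a black-box selection principle, which is legitimate here since $Y$ is complete and $\mathcal U(\mathcal H)$ is Polish in the strong topology.

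One small slip of wording: $A(y)$ is Hilbert--Schmidt, so its range is generally \emph{not} closed; what you use (and what the polar decomposition gives) is that the \emph{closure} of its range is $\overline{\Ima\alpha(y)}$. This does not affect the argument.
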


\begin{proof}
    First, by \eqref{eq:evaluationweakmes}the map $$\fon{L^\infty_\sigma\left(Y;B\left(L^1(X),H\right)\right)}{B\left(L^1(X),L^\infty_\sigma\left(Y;H\right)\right)}{\alpha}{u\mapsto \alpha(\cdot)(u)}$$is an isometric isomorphism. Furthermore, $L^\infty_\sigma(Y;H)=L^\infty(Y;H)$ since $H$ is a separable Hilbert space. Thus, for $u\in L^1(X)$, the maps $y\mapsto \alpha(y)(u)$ and $y\mapsto \beta(y)(u)$ are measurable.\smallskip

    Set $H'=H\oplus \ell^2(\N)$. Since $L^1(X)$ is separable, we can consider $(u_n)_{n\in \N}$ a dense sequence in $L^1(X)$. Denote $Y'$ a conull set in $Y$ such that $\eqref{eq:hypogeom}$ holds for all $y\in Y'$.\smallskip

    If $y\in Y$, define $H_y=\overline{\alpha(y)(L^1(X))}$, then the sequence $\left(\alpha(y)(u_n)\right)_{n\in \N}$ is dense in $H_y$. We apply the Gram-Schmidt process to this family: set $a_0(y)=\alpha(y)(u_0)$ which is measurable. If we have constructed $a_0(y),\dots,a_{n-1}(y)$ such that $$\vspan(a_0(y),\dots,a_{n-1}(y))=\vspan (\alpha(y)(u_0),\dots,\alpha(y)(u_{n-1}))$$ and each $a_k$ is measurable, we set $$a_n(y)=\alpha(y)(u_n)-\sum_{k<n,a_k(y)\neq 0} \frac{\langle a_k(y), \alpha(y)(u_n)\rangle}{\Vert a_k(y)\Vert^2} a_k(y).$$Recursively, this give a family of vectors $(a_n(y))_{n\in \N}$ which for each $y$ contains an orthogonal basis of $H_y$ and some zero vectors. Since $\{y \vert a_n(y)\neq 0\}$ is measurable, replacing $a_n(y)$ by $a_n(y)/\Vert a_n(y)\Vert$ on this set still gives a measurable function, and now $(a_n(y))$ contains an orthonormal basis and some zero vectors for each $y\in Y$.\smallskip

    With the same process, we construct for each $y\in Y$ a family $\left(b_n(y)\right)_{n\in \N}$ containing an orthonormal basis of $K_y=\overline{\beta(y)(L^1(X))}$ and some zero vectors such that for each $n\in \N$, $y\mapsto b_n(y)$ is measurable.\smallskip

    The crucial point is that using the hypothesis \eqref{eq:hypogeom}, for any $y\in Y'$ we have \begin{equation}
        \label{eq:null} a_n(y)=0 \Longleftrightarrow b_n(y)=0
    \end{equation}
    and
    \begin{equation}
        \label{eq:decompo} a_n(y)=\sum_{k=0}^n \lambda_k(y)\alpha(y)(u_k) \Longleftrightarrow b_n(y)= \sum_{k=0}^n \lambda_k(y)\beta(y)(u_k).
    \end{equation}

Now, consider an orthonormal basis $(e_n)_{n\in \N}$ of $H$ and an orthonormal basis $(f_n)_{n\in \N}$ of $\ell^2(\N)$. Since $\ell^2(\N)$ has uncountable dimension, there exists $(f'_n)_{n\in \N}$ such that $(f_n)\cup (f'_n)$ is linearly independent. Let $(g_n)=(e_n+f'_n)\cup (f_n)$. This is a total family in $H'=H\oplus \ell^2(\N)$. Let $$c_n(y)=P_{H_y^\bot}(g_n)=g_n-\sum \langle a_n(y),g_n \rangle a_n(y)$$and $$d_n(y)=P_{K_y^\bot}(g_n)=g_n-\sum \langle b_n(y),g_n \rangle b_n(y).$$As limits of measurable functions, $c_n,d_n$ are measurable since $Y$ is complete. The family $(c_n(y))_{n\in \N}$ is total in $H_y^\bot$, and linearly independent. Indeed, if there is a relation $\sum_{i=1}^n \lambda_i c_i(y)=0$, then $\sum_{i=1}^n \lambda_ig_i\in H_y$, but $\left(\vspan (g_n)_{n\in\N}\right)\cap H=\{0\}$ by construction, so $\lambda_i=0$ for any $1\leq i\leq n$.

Similarly, the family $\left(d_n(y)\right)$ is total in $K_y^\bot$ and linearly independent. Thus, applying the Gram-Schmidt process produces $\left(a'_n(y)\right)_{n\in\N}$ and $\left(b'_n(y)\right)_{n\in\N}$, which are also measurable functions and an orthonormal basis of $H_y^\bot,K_y^\bot$ respectively.\medskip

For $y\in Y'$, we have two orthonormal bases of $H'=H\oplus \ell^2(\N)$. Thus, there is a unique unitary map $U_y$ sending $a_n(y)$ to $b_n(y)$ and $a'_n(y)$ to $b'_n(y)$, using \eqref{eq:null} to ensure that $U_y$ is well-defined on the zero vectors in $\left(a_n(y)\right)_{n\in \N}$. On $Y\setminus Y'$, we set $U_y=\Id$.\smallskip

Using $\eqref{eq:decompo}$ we show recursively that for any $y\in Y'$, $n\in \N$, $$U_y(\alpha(y)(u_n))=\beta(y)(u_n).$$ Thus, by density of $(u_n)$ and continuity of $U_y$, we get that for any $u\in L^1(X)$, $$U_y(\alpha(y)(u))=\beta(y)(u).$$

Let $\xi\in H'$, then for $y\in Y'$, $$\displaystyle \xi=\sum_{n\geq 0} \left(\langle a_n(y),\xi \rangle a_n(y) + \langle a'_n(y),\xi \rangle a'_n(y)\right)$$so $$\displaystyle U_y\xi=\sum_{n\geq 0} \left(\langle a_n(y),\xi \rangle b_n(y) + \langle a'_n(y),\xi \rangle b'_n(y)\right).$$ Again since $Y$ is complete, $y\mapsto U_y\xi$ is measurable as a pointwise limit almost everywhere of measurable functions.\\
Since this is true for any $\xi\in H'$ and since $H'$ is separable, this implies that $y\mapsto U_y$ is measurable for the strong operator topology on $\mathcal{U}\left(H'\right)$.
\end{proof}

\subsection{Relation between properties}
Ozawa showed the following implications between these strengthenings of property $(T)$ (\cite[Thm. 1]{Ozawa+2011+89+104}).
\begin{theorem}\label{thm:ozawa_impl}
    $$\textrm{rel. property }(T_P) \Longrightarrow \textrm{rel. property }(TTT)\Longrightarrow \textrm{rel. property }(T_Q).  $$
\end{theorem}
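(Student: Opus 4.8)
The plan is to prove both implications by contraposition, using throughout that $G$, being locally compact and second countable, is $\sigma$-compact.

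\textbf{Rel.\ $(T_P)\Rightarrow$ rel.\ $(TTT)$.} Let $(b,\pi)$ be a wq-cocycle with defect $D=\sup_{g,h}\|b(gh)-b(g)-\pi(g)b(h)\|$ and suppose, for contradiction, that $\sup_{x\in A}\|b(x)\|=\infty$. The gadget I would use is the family of Gaussian kernels $\theta_t(x,y)=\exp(-t\|b(x)-b(y)\|^2)$, $t>0$. Since $(u,v)\mapsto\|u-v\|^2$ is conditionally negative definite on $H$, Schoenberg's theorem provides a Hilbert space $\mathcal H_t$ and a continuous map $j\colon H\to\mathcal H_t$ with $\langle j(u),j(v)\rangle=\exp(-t\|u-v\|^2)$, so $\theta_t$ is a Borel normalized positive definite kernel ($x\mapsto j(b(x))$ being measurable). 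Two elementary observations: first, $\|b(1)\|\le D$ (apply the defect inequality to $g=h=1$ and use that $\pi(1)$ is unitary), so for any compact $K$, writing $M_K:=\sup_{x\in K}\|b(x)\|<\infty$ by local boundedness, one gets $\sup_{x\in K}|\theta_t(x,1)-1|\le t(M_K+D)^2\to 0$ as $t\to0$; second, since $1\in A$ and $\|b(1)\|\le D$, the assumption forces $\sup_{x,y\in A}\|b(x)-b(y)\|=\infty$, hence $\sup_{x,y\in A}|\theta_t(x,y)-1|=1$ for every $t$.

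The heart of the argument is the uniform estimate $\sup_{g\in G}\|\theta_t(g\cdot,g\cdot)-\theta_t\|_{cb}\le 2\sqrt{2t}\,D$. Pointwise the two kernels differ by only $O(tD)$, but a pointwise bound gives no control of the cb-norm, so I would instead produce an explicit near-factorization. Write $\tilde b_g(x)=b(gx)-b(g)=\pi(g)b(x)+r(g,x)$ with $\|r(g,x)\|\le D$; then $\theta_t(gx,gy)=\langle j(\tilde b_g(x)),j(\tilde b_g(y))\rangle$, while, since the kernel depends only on the $\pi(g)$-invariant quantity $\|u-v\|$, also $\theta_t(x,y)=\langle j(\pi(g)b(x)),j(\pi(g)b(y))\rangle$. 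Put $P(x)=j(\tilde b_g(x))$ and $Q(x)=j(\pi(g)b(x))$: both are unit vectors in $\mathcal H_t$ and $\|P(x)-Q(x)\|^2=2-2\exp(-t\|r(g,x)\|^2)\le 2tD^2$. Hence $\theta_t(g\cdot,g\cdot)-\theta_t=\langle P(x),P(y)-Q(y)\rangle+\langle P(x)-Q(x),Q(y)\rangle$ is a sum of two kernels, each of cb-norm $\le\sqrt{2t}\,D$, and the bound is uniform in $g$. The point of passing through the Gaussian embedding is precisely that it compresses the (not small) defect error $r(g,x)$ into a genuinely small perturbation in $\mathcal H_t$ once $t$ is small; \emph{this is the step I expect to be the main obstacle}, both to discover and to set up cleanly. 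To finish, apply rel.\ $(T_P)$ with $\varepsilon=1/2$ to obtain $\delta>0$ and a compact $K$, then choose $t$ with $t(M_K+D)^2<\delta$ and $2\sqrt{2t}\,D<\delta$; by the Remark following the definition, $\theta_t$ then satisfies \eqref{eq:tp_h1} and \eqref{eq:tp_hweak}, so $(T_P)$ forces $\sup_{x,y\in A}|\theta_t(x,y)-1|<1/2$, contradicting the second observation. Thus every wq-cocycle is bounded on $A$.

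\textbf{Rel.\ $(TTT)\Rightarrow$ rel.\ $(T_Q)$.} Again by contraposition: if $(G,A)$ fails rel.\ $(T_Q)$ there is $\varepsilon_0>0$ such that, fixing an exhaustion $K_1\subset K_2\subset\cdots$ of $G$ by compacts, for each $n$ one finds a Borel $\pi_n\colon G\to\mathcal U(H_n)$ and a unit vector $\xi_n$ with $\sup_{g,h}\|\pi_n(gh)\xi_n-\pi_n(g)\pi_n(h)\xi_n\|<1/n$, $\sup_{g\in K_n}\|\pi_n(g)\xi_n-\xi_n\|<1/n$, yet $\sup_{x\in A}\|\pi_n(x)\xi_n-\xi_n\|\ge\varepsilon_0$. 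Setting $b_n(g)=\pi_n(g)\xi_n-\xi_n$, one computes $b_n(gh)-b_n(g)-\pi_n(g)b_n(h)=\pi_n(gh)\xi_n-\pi_n(g)\pi_n(h)\xi_n$, so $b_n$ is a wq-cocycle of defect $<1/n$, bounded by $2$, with $\|b_n(g)\|<1/n$ for $g\in K_n$ and $\sup_{x\in A}\|b_n(x)\|\ge\varepsilon_0$. Choosing scalars $c_n\to\infty$ with $\sum_n c_n^2/n^2<\infty$ (e.g.\ $c_n=n^{1/4}$), I would set $\pi=\bigoplus_n\pi_n$ on $H=\bigoplus_nH_n$ and $b(g)=(c_nb_n(g))_n$. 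The summability of $\sum_nc_n^2/n^2$ together with $\|b_n(g)\|<1/n$ for $g\in K_n$ makes $b$ pointwise well-defined, Borel, and locally bounded, and bounds the defect by $(\sum_nc_n^2/n^2)^{1/2}$; so $(b,\pi)$ is a wq-cocycle, and $\sup_{x\in A}\|b(x)\|^2\ge c_m^2\varepsilon_0^2$ for every $m$ shows it is unbounded on $A$, contradicting rel.\ $(TTT)$. The only routine care here is the strong-operator measurability of $\pi$ and of $b$, which follows from Pettis measurability since all the series involved converge absolutely by Cauchy--Schwarz.
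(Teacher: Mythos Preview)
Your argument is correct, and indeed this theorem is not proved in the paper at all: it is quoted as \cite[Thm.~1]{Ozawa+2011+89+104} and merely stated. Your proof reproduces Ozawa's original strategy faithfully --- the Gaussian kernels $\theta_t(x,y)=\exp(-t\|b(x)-b(y)\|^2)$ for the first implication and the rescaled direct sum of almost-invariant vectors for the second are exactly the constructions in Ozawa's paper, with the same key estimate $\|P(x)-Q(x)\|^2\le 2tD^2$ in the Gaussian embedding.

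One minor remark on the second implication: you should perhaps make explicit that the Hilbert space $H=\bigoplus_n H_n$ can be taken separable (replace each $H_n$ by the closed span of $\{\pi_n(g)\xi_n:g\in G\}$, which is separable since $G$ is second countable and $g\mapsto\pi_n(g)\xi_n$ is Borel); this is needed for the Pettis argument you invoke and matches the standing separability hypotheses used elsewhere in the paper.
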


We aim to show that these properties are all equivalent.

\begin{theorem}\label{thm:equiv}
    If $G$ is a second countable locally compact group and $A$ a subgroup of $G$, then if $(G,A)$ has relative property $(T_Q)$, it has relative property $(T_P)$.
\end{theorem}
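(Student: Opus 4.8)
The plan is to close the circle in Theorem~\ref{thm:ozawa_impl} by showing that relative property $(T_Q)$ implies relative property $(T_P)$. The strategy is to start from an arbitrary continuous normalized positive definite kernel $\theta$ on $G$ satisfying the two hypotheses \eqref{eq:tp_h1} and \eqref{eq:tp_h2} for parameters $\delta$ and $K$ to be fixed, and to manufacture from it a Borel map $\pi:G\to\U(H')$ together with a unit vector $\xi$ such that the pair $(\pi,\xi)$ satisfies \eqref{eq:tq_h1} and \eqref{eq:tq_h2} for comparable parameters; then the conclusion \eqref{eq:tq_c} of $(T_Q)$ will translate back into \eqref{eq:tp_c}. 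The first step is to write $\theta(x,y)=\langle F(x),F(y)\rangle$ with $F:G\to H$ continuous and $\|F(x)\|=1$ (normalization). One expects the natural candidate to be $\xi=F(1)$ and $\pi(g)$ a unitary implementing the translation $F(x)\mapsto$ (something like) $F(gx)$; the point of hypothesis \eqref{eq:tp_h1} is precisely that $g\cdot\theta$ and $\theta$ are cb-close, so the two kernels $\theta(gx,gy)$ and $\theta(x,y)$ admit almost-matching Hilbert-space factorizations.

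The second, and technically central, step is to make this "almost unitary" intertwiner genuinely unitary and measurable. This is where Proposition~\ref{prop:factorisation} and Lemma~\ref{lem:unitarymeas} enter. Applying Proposition~\ref{prop:factorisation} to $\theta$ with $\phi(x,g,y)=\theta(gx,gy)-\theta(x,y)$ gives $a,b\in L^\infty_\sigma(G;B(L^1(G),H))$ with $\|a\|_\infty\|b\|_\infty\le\delta$ and $[\tilde\phi(g)(u)](v)=\langle a(g)(u),b(g)(u)\rangle$ a.e. Combining this with the trivial factorization of $\theta(gx,gy)$ and $\theta(x,y)$ through $F$, one assembles two maps $\alpha,\beta\in L^\infty_\sigma(G;B(L^1(G),H\oplus\cdots))$ whose Gram matrices agree up to the controlled error $\delta$; absorbing the error (again by adding a summand and perturbing) puts us in the exact hypothesis \eqref{eq:hypogeom} of Lemma~\ref{lem:unitarymeas}, which then produces the desired measurable $\pi(g)=U_g\in\U(H')$ intertwining the two factorizations. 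The vector $\xi$ is taken to be (the image of evaluation against) $F(1)$, extended by zero to $H'$.

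The third step is bookkeeping: verify that $\sup_{g,h}\|\pi(gh)\xi-\pi(g)\pi(h)\xi\|$ is $O(\sqrt\delta)$ using the cocycle-type identity satisfied by the $U_g$ on the range of $\alpha$ together with the $\sqrt2\,|\theta(x,y)-1|^{1/2}$ estimate displayed in the excerpt, and that $\sup_{g\in K}\|\pi(g)\xi-\xi\|$ is small because of \eqref{eq:tp_h2} (or its weak form \eqref{eq:tp_hweak}): $\|\pi(g)\xi-\xi\|$ should be comparable to $|\theta(g,1)-1|^{1/2}$. Choosing the $(T_Q)$-parameters for target accuracy $\varepsilon$, one reads off the $\delta$ and $K$ needed, and then $\sup_{x,y\in A}|\theta(x,y)-1|=\sup_{x,y\in A}|\langle F(x),F(y)\rangle-1|$ is controlled by $\sup_{x\in A}\|\pi(x)\xi-\xi\|$ via the polarization/Cauchy-Schwarz inequality already recorded, giving \eqref{eq:tp_c}.

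The main obstacle I anticipate is the second step: turning the cb-closeness hypothesis into an \emph{exact} equality of Gram kernels in the form \eqref{eq:hypogeom}, rather than an approximate one, so that Lemma~\ref{lem:unitarymeas} applies on the nose. The natural fix is to pass to a direct sum $H\oplus(L^1(G)\text{-related space})\oplus\ell^2(\N)$ and define $\alpha,\beta$ so that the discrepancy $\phi$ is literally encoded as one extra inner-product block on each side, which forces the two Gram matrices to coincide identically; the price is only that the resulting $\pi$ moves $\xi$ by $O(\sqrt\delta)$ rather than exactly fixing it, which is harmless. A secondary technical point is measurability of all the intermediate objects (continuity of $F$, $w^*$-measurability of $a,b$, completeness of $G$ as a measure space), but these are exactly the hypotheses under which Proposition~\ref{prop:factorisation} and Lemma~\ref{lem:unitarymeas} were stated, so they should go through without trouble.
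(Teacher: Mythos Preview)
Your plan matches the paper's proof essentially step for step: write $\theta=\langle\xi(\cdot),\xi(\cdot)\rangle$, apply Proposition~\ref{prop:factorisation} to factor $g\cdot\theta-\theta$ through $a,b$, use the polarization trick $\tilde a=(a+b)/2$, $\tilde b=(a-b)/2$ to convert the approximate equality into the exact Gram identity~\eqref{eq:hypogeom} in $H\oplus H'$ (this is precisely the ``extra block'' you anticipate), invoke Lemma~\ref{lem:unitarymeas} to get a measurable unitary $\pi(g)$, and then verify \eqref{eq:tq_h1}, \eqref{eq:tq_h2} for $\xi=\xi(e)$ and translate \eqref{eq:tq_c} back to \eqref{eq:tp_c}.

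The one point you underestimate is not the measurability of the \emph{inputs} to Proposition~\ref{prop:factorisation} and Lemma~\ref{lem:unitarymeas}, but what comes out: Lemma~\ref{lem:unitarymeas} yields $\pi$ measurable only for the \emph{completion} of the Borel $\sigma$-algebra, and the intertwining relation holds only for \emph{almost every} $g$, whereas $(T_Q)$ asks for a Borel $\pi$ and the estimates \eqref{eq:tq_h1}, \eqref{eq:tq_h2} for \emph{all} $g,h$. The paper fixes this (following Ozawa) via the Lusin--Novikov uniformization theorem: one finds a Borel map $g\mapsto t_g$ into a fixed compact set with both $t_g$ and $gt_g^{-1}$ lying in the good conull Borel set, and replaces $\pi(g)$ by $\tilde\pi(g)=\pi(gt_g^{-1})\pi(t_g)$, at the cost of doubling the constant. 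This is not deep, but it is a genuine extra step your last paragraph does not account for.
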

\begin{proof}
    Let $\varepsilon>0$ and $\theta$ be a continuous positive definite normalized kernel verifying \eqref{eq:tp_h1} and \eqref{eq:tp_h2} for some $\delta,K$ to be determined later.
By definition, there exists a separable Hilbert space $H$ and a continuous map $\xi:G\to H$ such that $\forall g,h\in G$, $\theta(g,h)=\langle \xi(g),\xi(h) \rangle$ and $\forall g\in G$, $\Vert \xi(g)\Vert=1$.\medskip

By Proposition \ref{prop:factorisation}, there exists a separable Hilbert space $H'$ and two functions $a,b\in L^\infty_\sigma(G;B(L^1(G),H'))$ such that for almost every $g\in G$ and for all $u,v\in L^1(G)$, $$\int_{G\times G} \left(\theta(gx,gy)-\theta(x,y)\right) u(x)v(y)dxdy = \langle a(g)(u),b(g)(v)\rangle$$ with $\Vert a\Vert_\infty\Vert b\Vert_\infty <\delta$. Up to multiplying $a,b$ by some constant, we can actually assume that $\Vert a\Vert_\infty<\sqrt{\delta}$ and $\Vert b\Vert_\infty<\sqrt{\delta}$. 

With the notation of \eqref{eq:isometric-integration}, we also get \begin{multline*}
\int_{G\times G} \left(\theta(gx,gy)-\theta(x,y)\right) u(x)v(y)dxdy \\
    = \langle T(g^{-1}\xi)(u),T(g^{-1}\xi)(v)\rangle - \langle T(\xi)(u),T(\xi)(v)\rangle.
\end{multline*}
But then, setting $\Tilde{a}(g)(u)=\frac{a(g)(u)+b(g)(u)}{2}$ and $\Tilde{b}(g)(u)=\frac{a(g)(u)-b(g)(u)}{2}$, we also have $\Vert \Tilde{a}\Vert_\infty<\sqrt{\delta}$ and $\Vert \Tilde{b}\Vert_\infty<\sqrt{\delta}$. In the space $H\oplus H'$, we have for any $u,v\in L^1(G)$ and almost every $g\in G$, we get \begin{multline*}\langle (T(\xi)(u),\Tilde{a}(g)(u)),(T(\xi)(v),\Tilde{a}(g)(v))\rangle \\=\langle (T(g^{-1}\xi)(u),\Tilde{b}(g)(u)),(T(g^{-1}\xi)(v),\Tilde{b}(g)(v))\rangle.\end{multline*}

We apply Lemma \ref{lem:unitarymeas} to $X=Y=G$ and $$\alpha(g)(u)=(T(\xi)(u),\Tilde{a}(g)(u)), \beta(g)(u)=(T(g^{-1}\xi)(u),\Tilde{b}(g)(u)),$$ to get a map $\pi:G\to \mathcal{U}(H\oplus H'\oplus \ell^2(\N))$ which is measurable for the completion of the Borel $\sigma$-algebra on $G$, and such that for almost every $g\in G$ and every $u\in L^1(G)$, \begin{equation}
    \label{eq:pialmostrep}\pi(g)(T(\xi)(u),\Tilde{a}(g)(u))=(T(g^{-1}\xi)(u),\Tilde{b}(g)(u)).
\end{equation}Then, \begin{align*}
    \Vert T(g^{-1}\xi)(u) - \pi(g)T(\xi)(u) \Vert & \leq  \Vert \Tilde{b}(g)(u)\Vert+\Vert(T(g^{-1}\xi)(u),\Tilde{b}(g)(u))-\pi(g)T(\xi)(u) \Vert  \\
     & \leq \sqrt{\delta}\Vert u\Vert + \Vert \pi(g)(T(\xi)(u),\Tilde{a}(g)(u))-\pi(g)T(\xi)(u)\Vert\\
     & \leq  \sqrt{\delta}\Vert u\Vert+\Vert \Tilde{a}(g)(u)\Vert\\
     & \leq 2\sqrt{\delta}\Vert u\Vert.
\end{align*}

But since $$\pi(g)T(\xi)(u)=\pi(g)\int_G u(x)\xi(x)dx=\int_G u(g)\pi(g)(\xi(x))dx=T(\pi(g)\circ \xi)(u),$$we get that for almost every $g$, $$\Vert T(g^{-1}\xi-\pi(g)\circ \xi)(u)\Vert \leq 2\sqrt{\delta}\Vert u\Vert$$
thus $$\Vert T(g^{-1}\xi-\pi(g)\circ \xi)\Vert_{B(L^1(X),H)} \leq 2\sqrt{\delta}.$$Since $T$ is an isometry and $\xi$ is continuous, for almost every $g\in G$ and for all $x\in G$, \begin{equation}
    \label{eq:star} \Vert \xi(gx)-\pi(g)\xi(x)\Vert \leq 2\sqrt{\delta}. 
\end{equation}

We want to change $\pi$ so that $\eqref{eq:pialmostrep}$ holds everywhere and $\pi$ is a Borel map. We proceed as in \cite{Ozawa+2011+89+104}. Let $M$ be a Borel subset of $G$ of measure zero such that $\eqref{eq:pialmostrep}$ holds for all $g\in G\setminus M$. There exists also a Borel subset $N$ of measure zero such that $\pi$ is Borel $G\setminus N$. By regularity of the Haar measure, there exists a $G_\delta$ set of measure zero $N'=\bigcap_n U_n$ such that $M\cup N\subset N'$. Let $K$ be any compact neighborhood of $G$ and consider the map multiplication map $m:(G\setminus N')\times (K\setminus N')\to G$. Since $N'$ has zero measure and $K$ positive measure, $m$ is surjective. Furthermore, for any $g\in G$, \begin{align*}
   m^{-1}(\{g\})  & = \left\lbrace (gk^{-1},k) \vert k\in K\setminus N', xk^{-1}\in G\setminus N'\right\rbrace \\
     & =  \bigcup_{p,q\in \N} \left\lbrace(gk^{-1},k) \vert k\in K\cap U_p^\mathsf{c}\right\rbrace \cap \left((G\setminus U_n)\times K\right)
\end{align*}so $m^{-1}(\{g\})$ is $\sigma$-compact. Thus, applying the Lusin-Novikov uniformization theorem (\cite[Thm.35.46]{kechris2012classical}), there exists a Borel section $s:G\to (G\setminus N')\times (K\setminus N')$ of $m$. Then $t=p_K\circ s:G\to K$ is a Borel map  such that $\forall g\in G$, $gt_g^{-1},t_g\in G\setminus N'$.\\
Set $\Tilde{\pi}(g)=\pi(gt_g^{-1})\pi(t_g)$, this is a Borel map and $\forall g\in G$, \begin{align*}
   \Vert \xi(gx)-\Tilde{\pi}(g)\xi(x)\Vert  & \leq  \Vert \xi(gx)-\pi(gt_g^{-1})\xi(t_gx)\Vert\\&\phantom{{}\leq}+\Vert \pi(gt_g^{-1})\xi(t_gx)- \pi(gt_g^{-1})\pi(t_g)\xi(x)\Vert \\
     & \leq  2\sqrt{\delta}+\Vert  \xi(t_gx)- \pi(t_g)\xi(x)\Vert \\
     & \leq  4\sqrt{\delta}
\end{align*}since $\eqref{eq:star}$ holds for $t_g$ and $gt_g^{-1}$.\medskip

Let $\xi=\xi(e)$. Let us show that the pair $(\Tilde{\pi},\xi)$ verifies \eqref{eq:tq_h1} and \eqref{eq:tq_h2} to apply relative property $(T_Q)$.

By hypothesis \eqref{eq:tp_h2}, we have for any $g\in G,x\in K$, $$\vert \theta(g,gx)-1\vert < \delta
  \Longleftrightarrow    \vert \langle \xi(g),\xi(gx)\rangle -1\vert < \delta.$$
Thus, \begin{align*}
    \Vert \xi(gx)-\xi(g)\Vert^2 & =  \left\vert\Vert \xi(gx)\Vert^2+\Vert \xi(g)\Vert^2 - \langle \xi(gx),\xi(g)\rangle - \langle \xi(g),\xi(gx)\rangle \right\vert\\
     & \leq  \vert 1-\langle \xi(gx),\xi(g)\rangle \vert + \vert 1-\langle \xi(g),\xi(gx)\rangle\vert\\
     & \leq  2\vert \langle \xi(g),\xi(gx)\rangle -1\vert\\
     & \leq  2\delta.
\end{align*}Hence, for any $g\in G,x\in K$, we have \begin{equation} \label{eq:moveinK} \Vert \xi(gx)-\xi(g)\Vert < \sqrt{2\delta}.
\end{equation}

Then, if $x\in K$, we have $$\Vert\Tilde{\pi}(x)\xi-\xi\Vert\leq \Vert \Tilde{\pi}(x)\xi(e)-\xi(x)\Vert+\Vert \xi(x)-\xi(e)\Vert \leq (4+\sqrt{2})\sqrt{\delta}=\delta'$$ by \eqref{eq:star} and \eqref{eq:moveinK}.\medskip

Let $g,h\in G$. We have that \begin{align*}
    \Vert \Tilde{\pi}(gh)\xi-\Tilde{\pi}(g)\Tilde{\pi}(h)\xi\Vert & \leq  \Vert \Tilde{\pi}(gh)\xi(e)-\xi(gh)\Vert+\Vert \xi(gh)-\Tilde{\pi}(g)\xi(h)\Vert\\
    &\phantom{{}\leq}+\Vert \Tilde{\pi}(g)\xi(h)-\Tilde{\pi}(g)\Tilde{\pi}(h)\xi(e)\Vert\\ 
     & <  4\sqrt{\delta}+4\sqrt{\delta}+\Vert \xi(h)-\Tilde{\pi}(h)\xi(e)\Vert\\
     & <  12\sqrt{\delta}=\delta''.
\end{align*}

Now since $(G,A)$ has relative property $(T_Q)$, choosing $K$ associated to $\varepsilon$ in $(T_Q)$ and $\delta$ small enough so that $\delta',\delta''$ are associated to $\varepsilon$, $(\Tilde{\pi},\xi)$ verifies \eqref{eq:tq_h1} and \eqref{eq:tq_h2}. Then we have by relative $(T_Q)$ \eqref{eq:tq_c} that for any $x\in A$, $$\Vert \Tilde{\pi}(x)\xi-\xi\Vert=\Vert \Tilde{\pi}(x)\xi(e)-\xi(e)\Vert<\varepsilon.$$ 

Let $x,y\in A$, \begin{align*}
   \vert \theta(x,y)-1\vert^2  & =  \vert \langle \xi(x),\xi(y)\rangle -1\vert^2  \\
     & =  (1-\langle \xi(x),\xi(y)\rangle)(1-\overline{\langle \xi(x),\xi(y)\rangle})\\
     & =  1-\langle \xi(x),\xi(y)\rangle-\overline{\langle \xi(x),\xi(y)\rangle}+ \vert \langle \xi(x),\xi(y)\rangle\vert^2\\
     & \leq  2-\langle \xi(x),\xi(y)\rangle-\overline{\langle \xi(x),\xi(y)\rangle} \\
     & =  \Vert \xi(x)-\xi(y)\Vert^2
\end{align*}
thus \begin{align*}
    \vert \theta(x,y)-1\vert & \leq  \Vert \xi(x)-\xi(y)\Vert\\
     & \leq  \Vert \xi(x)-\Tilde{\pi}(x)\xi(e)\Vert + \Vert\Tilde{\pi}(x)\xi(e)-\xi(e)\Vert\\
     &\phantom{{}\leq}+\Vert \xi(e)-\Tilde{\pi}(y)\xi(e)\Vert+\Vert \Tilde{\pi}(y)\xi(e)-\xi(y)\Vert\\
     & \leq  4\sqrt{\delta}+\varepsilon+\varepsilon+4\sqrt{\delta}=\varepsilon'
\end{align*} by relative $(T_Q)$ and by \eqref{eq:star}.\medskip

Hence, we showed \eqref{eq:tp_c} for $\varepsilon'$, so $(G,A)$ has relative property $(T_P)$.
\end{proof}

It was shown in \cite{Ozawa+2011+89+104} that both $(T_P)$ and $(T_Q)$ passes to lattices, but as noticed in the introduction, it was not clear whether $(TTT)$ passes to non cocompact lattices. The equivalence of these three properties immediately implies Theorem \ref{thm:latticesTTT}.

\begin{coro}
    Let $G$ be a locally compact group and $\Gamma$ a lattice in $G$, then $G$ has $(TTT)$ if and only if $\Gamma$ has $(TTT)$.
\end{coro}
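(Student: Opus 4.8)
The plan is to derive the corollary purely formally, with no new analytic input: the two ingredients are the equivalence of the three relative properties just established and Ozawa's fact that relative properties $(T_P)$ and $(T_Q)$ are inherited both by and from lattices. The point of the whole argument is to detour through $(T_P)$/$(T_Q)$, since it is exactly $(TTT)$ itself that was not known to behave well with respect to non-cocompact lattices.

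First I would record the trivial but essential observation that a locally compact group $G$ has property $(TTT)$ precisely when the pair $(G,G)$ has relative property $(TTT)$, and likewise $\Gamma$ has $(TTT)$ precisely when $(\Gamma,\Gamma)$ has relative property $(TTT)$; so it suffices to compare the relative properties of $(G,G)$ and of $(\Gamma,\Gamma)$. Next, combining Theorem \ref{thm:ozawa_impl} (Ozawa's chain $(T_P)\Rightarrow(TTT)\Rightarrow(T_Q)$) with Theorem \ref{thm:equiv} (which closes the loop $(T_Q)\Rightarrow(T_P)$), I obtain that for every second countable locally compact group $G'$ and every subgroup $A'\leq G'$, the pair $(G',A')$ has relative property $(T_P)$ if and only if it has relative property $(TTT)$ if and only if it has relative property $(T_Q)$. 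Applying this to $(G,G)$ and to $(\Gamma,\Gamma)$ reduces the corollary to the single assertion that $(G,G)$ has relative property $(T_P)$ if and only if $(\Gamma,\Gamma)$ does.

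That last assertion is exactly the statement established in \cite{Ozawa+2011+89+104}: relative property $(T_P)$ (equivalently $(T_Q)$) passes to lattices and back — restricting a normalized positive definite kernel from $G$ to $\Gamma$ handles the direction ``$G$ has it $\Rightarrow \Gamma$ has it'', while the converse is obtained by the induction procedure from the lattice used by Ozawa for $SL_3$ and its cocompact lattices. Chaining the equivalences gives: $G$ has $(TTT)$ $\iff$ $(G,G)$ has rel.\ $(T_P)$ $\iff$ $(\Gamma,\Gamma)$ has rel.\ $(T_P)$ $\iff$ $\Gamma$ has $(TTT)$, which is the claim.

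I do not expect a genuine obstacle here — the corollary is a bookkeeping consequence of Theorem \ref{thm:equiv}, which is where the real work lies; the subtlety that was blocking this statement before is precisely that one cannot run any of these steps with $(TTT)$ directly. The only minor technical point to check is applicability of Theorem \ref{thm:equiv}, i.e.\ that one may assume $G$ to be second countable (for instance by replacing $G$ by a second countable quotient through which the relevant kernels and representations factor, as is standard for property $(T)$-type arguments), the lattice $\Gamma$ passing to a lattice in that quotient.
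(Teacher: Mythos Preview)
Your proof is correct and follows exactly the paper's approach: the corollary is deduced immediately from the equivalence $(T_P)\Leftrightarrow(TTT)\Leftrightarrow(T_Q)$ (Theorems~\ref{thm:ozawa_impl} and~\ref{thm:equiv}) combined with Ozawa's result that $(T_P)$ and $(T_Q)$ pass to and from lattices. Your worry about second countability is unnecessary, since both Theorem~\ref{thm:equiv} and Theorem~\ref{thm:latticesTTT} (which this corollary is restating) already carry that hypothesis.
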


\section{The symplectic group \texorpdfstring{$Sp_4(\K)$}{Sp4(K)}}
Let $\K$ be a local field. We consider the symplectic group $$Sp_4(\K)=\left\lbrace g\in GL_4(\K) \vert {}^tgJg=J\right\rbrace$$ where $J=\begin{pmatrix}
    0 & I_2 \\ -I_2 & 0
\end{pmatrix}$. Let also $S^{2*}(\K^2)$ be the vector space of symmetric bilinear form on $\K^2$ which can be identified with the space of symmetric matrices in $M_2(\K)$. Then the group $SL_2(\R)$ acts on $S^{2*}(\K^2)$ by $g.B=gB{}^tg$.\medskip

Consider the subgroup $$G_2=\left\lbrace g_A=\begin{pmatrix}A & 0\\0& {}^tA^{-1} \end{pmatrix} \mid A\in SL_2(\K)\right\rbrace\simeq SL_2(\K)$$as well as the two subgroups$$N_2^{+}=\left\lbrace X_B^+=\begin{pmatrix}I_2 & B\\0&I_2\end{pmatrix}\mid B\in M_2(\K), {}^tB=B 
\right\rbrace$$and$$N_2^-=\left\lbrace X_B^-=\begin{pmatrix}I_2 & 0\\B&I_2 \end{pmatrix}\mid B\in M_2(\K), {}^tB=B \right\rbrace.$$

Then the maps $$\fonction{\iota_1}{SL_2(\K)\ltimes S^{2*}(\K^2)}{Sp_4(\K)}{(A,B)}{X_B^+g_A}$$and $$\fonction{\iota_2}{SL_2(\K)\ltimes S^{2*}(\K^2)}{Sp_4(\K)}{(A,B)}{X_B^-g_{{}^tA^{-1}}}$$define two group embeddings of $SL_2(\K)\ltimes S^{2*}(\K^2)$ with $N_2^{+},N_2^{-}$ as images of $S^{2*}(\K^2)$.\medskip

It is known that the pair $(SL_2(\K)\ltimes S^{2*}(\K^2),S^{2*}(\K^2))$ has relative property $(T)$ (see \cite[Coro. 1.5.2]{bekka_de_la_harpe_valette_2008}) thus by \cite[Prop. 3]{Ozawa+2011+89+104}, it has relative property $(T_P)$.\medskip

\begin{theorem}\label{thm:sp4}
Let $\K$ be a local field, the group $Sp(4,\K)$ has property $(T_P)$.  
\end{theorem}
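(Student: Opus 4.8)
The plan is to deduce property $(T_P)$ for $Sp_4(\K)$ from the relative property $(T_P)$ of the two unipotent subgroups $N_2^+$ and $N_2^-$, using a bounded generation statement for $Sp_4(\K)$ together with the chaining estimate $\left\vert \theta(x,z)-1\right\vert \leq \sqrt{2}\,\left\vert \theta(x,y)-1\right\vert^{1/2}+\left\vert\theta(y,z)-1\right\vert$ valid for any normalized positive definite kernel (obtained by applying twice the inequality established at the start of Section 2).

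First I would record a monotonicity lemma: if $H$ is a closed subgroup of a second countable locally compact group $G$, $A\leq H$, and $(H,A)$ has relative property $(T_P)$, then so does $(G,A)$. Given $\varepsilon>0$, take the $\delta>0$ and the compact $K\subset H$ furnished by $(T_P)$ for $(H,A)$, and use the same $\delta$ and the same $K$ (now seen inside $G$) for $G$: if $\theta$ is a normalized positive definite kernel on $G$ satisfying \eqref{eq:tp_h1} and \eqref{eq:tp_h2}, its restriction to $H\times H$ is again normalized and positive definite, one has $\sup_{h\in H}\Vert h(\theta|_{H})-\theta|_{H}\Vert_{cb}\leq \sup_{g\in G}\Vert g\theta-\theta\Vert_{cb}<\delta$ since restricting a Schur multiplier to a sub-product does not increase its cb-norm, and \eqref{eq:tp_h2} persists verbatim; hence \eqref{eq:tp_c} holds on $A$. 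Applying this to the closed embeddings $\iota_1$ and $\iota_2$ and to the relative property $(T_P)$ of $(SL_2(\K)\ltimes S^{2*}(\K^2),S^{2*}(\K^2))$ recalled above, both $(Sp_4(\K),N_2^+)$ and $(Sp_4(\K),N_2^-)$ have relative property $(T_P)$. I would then observe that the hypotheses \eqref{eq:tp_h1} and \eqref{eq:tp_h2} are invariant under replacing $\theta$ by the translated kernel $\theta(g_0\cdot,g_0\cdot)$ for any fixed $g_0\in Sp_4(\K)$: for \eqref{eq:tp_h1} because precomposition by $(g_0\cdot,g_0\cdot)$ is a cb-isometry on kernels and conjugation by $g_0$ merely permutes the elements $g$, and for \eqref{eq:tp_h2} by the change of variables $(g,h)\mapsto(g_0g,g_0h)$, which preserves the constraint $g^{-1}h\in K$. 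Consequently, applying relative property $(T_P)$ of $(Sp_4(\K),N_2^{\pm})$ to $\theta(g_0\cdot,g_0\cdot)$ gives: for every $\varepsilon>0$ there are $\delta,K$ such that any normalized positive definite $\theta$ satisfying \eqref{eq:tp_h1} and \eqref{eq:tp_h2} has $\left\vert\theta(x,y)-1\right\vert<\varepsilon$ whenever $x^{-1}y\in N_2^+\cup N_2^-$.

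The last ingredient is bounded generation: there is an integer $N$, depending only on $Sp_4$ and not on $\K$, with $Sp_4(\K)=(N_2^+\cup N_2^-)^N$. This follows from the Bruhat decomposition and Gaussian elimination over the field $\K$: $Sp_4(\K)$ is a product of boundedly many root subgroups $U_\alpha$; those attached to the long roots and to $\pm(e_1+e_2)$ lie inside $N_2^+$ or $N_2^-$, the maximal torus contributes only boundedly many factors via the usual $SL_2$ identities, and $U_{\pm(e_1-e_2)}$, equivalently the Levi factor $G_2\simeq SL_2(\K)$, lies in a bounded product of $N_2^{\pm}$ through the commutator relations among root subgroups. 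Granting this, fix the target $\varepsilon_0>0$; since for fixed $N$ the quantity obtained by iterating the chaining estimate at most $N$ times from $\varepsilon$ tends to $0$ as $\varepsilon\to0^+$, choose $\varepsilon>0$ so that this quantity is $<\varepsilon_0$, let $\delta_1,K_1$ and $\delta_2,K_2$ be associated to $\varepsilon$ by relative $(T_P)$ for $N_2^+$ and $N_2^-$, and set $\delta=\min(\delta_1,\delta_2)$, $K=K_1\cup K_2$. If $\theta$ satisfies \eqref{eq:tp_h1} and \eqref{eq:tp_h2} for these $\delta,K$, then for $x,y\in Sp_4(\K)$ write $x^{-1}y=u_1\cdots u_m$ with $m\leq N$ and $u_i\in N_2^+\cup N_2^-$, put $z_0=x$ and $z_i=z_{i-1}u_i$, so that $z_{i-1}^{-1}z_i\in N_2^{\pm}$ gives $\left\vert\theta(z_{i-1},z_i)-1\right\vert<\varepsilon$; iterating the chaining estimate along $z_0,\dots,z_m=y$ bounds $\left\vert\theta(x,y)-1\right\vert$ by $\varepsilon_0$, which is exactly \eqref{eq:tp_c} for $A=Sp_4(\K)$, i.e. property $(T_P)$ for $Sp_4(\K)$.

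The step I expect to be the main obstacle is the bounded generation claim, in particular obtaining the bound $N$ uniformly in the local field $\K$ and realizing the root subgroup $U_{\pm(e_1-e_2)}$ together with the maximal torus inside a controlled number of copies of $N_2^{\pm}$; by contrast the monotonicity lemma, the translation invariance of the hypotheses, and the chaining argument are routine once the behaviour of the cb-norm under restriction is recorded.
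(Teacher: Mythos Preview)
Your overall strategy is sound and actually \emph{simpler in structure} than the paper's, but it diverges from the paper on the crucial combinatorial input, and your justification of that input is incomplete.

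Both you and the paper begin identically: pull back relative $(T_P)$ along $\iota_1,\iota_2$ to obtain, for every $s\in N_2^+\cup N_2^-$, the estimate $\lvert\theta(gs,g)-1\rvert<O(\varepsilon)$ uniformly in $g$. (The paper gets this via $\lvert\theta(gs,g)-\theta(s,1)\rvert\leq\Vert g\theta-\theta\Vert_{cb}<\delta$; you get it via translation invariance of the hypotheses --- the two are equivalent here.) The divergence is in how one reaches the remaining root subgroups $U_{\pm(e_1-e_2)}=a(\K),a'(\K)$, which lie in the Levi $G_2$ and \emph{not} in $N_2^+\cup N_2^-$. The paper does \emph{not} claim $Sp_4(\K)=(N_2^+\cup N_2^-)^N$. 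Instead it first uses that the diagonal copy $H\simeq SL_2(\K)$ is a bounded product of elements of $N^\pm\subset N_2^\pm$, then applies the Mautner-type Lemma~\ref{lem:mautner} with the contracting torus $d_\lambda\in H$ to transfer the estimate to $a(x),a'(x)$, and only then invokes Neuhauser's bounded generation of $Sp_4(\K)$ by $a(\K)\cup a'(\K)\cup N_2^+\cup N_2^-$. So the paper's bounded generation statement is strictly easier than yours, and the price it pays is the Mautner step.

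Your route --- bounded generation by $N_2^+\cup N_2^-$ alone --- does work, but the sentence ``$U_{\pm(e_1-e_2)}$, equivalently the Levi factor $G_2$, lies in a bounded product of $N_2^{\pm}$ through the commutator relations among root subgroups'' is not a proof: single commutators $[X_B^+,X_C^-]$ never land in the Levi unless they are trivial. A correct argument is the following. For any invertible symmetric $B$ one has $w_B:=X_B^+X_{-B^{-1}}^-X_B^+=\begin{pmatrix}0&B\\-B^{-1}&0\end{pmatrix}$, and hence $w_Bw_C^{-1}=g_{BC^{-1}}$. Since every $A\in GL_2(\K)$ is a product of two invertible symmetric matrices (equivalently, $A$ is conjugate to $A^t$ by a symmetric invertible matrix --- a classical fact valid over any field), every Levi element $g_A$ lies in $(N_2^+\cup N_2^-)^6$. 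Combined with the Bruhat/Gauss decomposition relative to the Siegel parabolic, this yields your bounded generation with an explicit $N$. With this step filled in, your argument is a legitimate alternative that trades the Mautner lemma for a slightly more delicate bounded-generation input; the paper's approach trades the other way.
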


We first need a Mautner type lemma adapted to the context of "almost invariance" instead of the usual invariance.
\begin{lem}\label{lem:mautner}Let $G$ be a locally compact group, $\theta:G\times G\to \C$ a normalized positive definite kernel such that $\underset{g\in G}{\sup}\Vert g.\theta-\theta\Vert_{cb}<\varepsilon$. Let $x,y\in G$ be such that $$\vert \theta(y^{-1}xy,1)-1\vert <\varepsilon \quad\textrm{and }\quad\vert \theta(y,1)-1\vert <\varepsilon,$$then $$\vert \theta(x,1)-1\vert <2\varepsilon+4\varepsilon^{1/2}.$$
    
\end{lem}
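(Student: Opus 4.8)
Throughout, I read $g.\theta$ as the diagonal translate $(g.\theta)(s,t)=\theta(gs,gt)$, consistently with Proposition~\ref{prop:factorisation}. Two elementary facts will be used repeatedly: first, $\|\psi\|_\infty\leq\|\psi\|_{cb}$ for any $\psi\in L^\infty(G\times G)$ (evaluate a factorization $\psi(s,t)=\langle P(s),Q(t)\rangle$ pointwise), so the hypothesis gives $|\theta(gs,gt)-\theta(s,t)|<\varepsilon$ for all $g,s,t$; second, a normalized positive definite kernel is Hermitian, $\theta(s,t)=\overline{\theta(t,s)}$, and satisfies the continuity estimate from the beginning of the paper, $|\theta(a,c)-\theta(b,c)|\leq\sqrt{2}\,|\theta(a,b)-1|^{1/2}$, for all $a,b,c\in G$.

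The plan is a two-step ``translate, then use continuity'' argument run twice. \emph{Step 1:} apply the near-invariance with the element $y$ to the pair $(y^{-1}xy,1)$: since $(y.\theta)(y^{-1}xy,1)=\theta(xy,y)$, we get $|\theta(xy,y)-\theta(y^{-1}xy,1)|<\varepsilon$, hence $|\theta(xy,y)-1|<2\varepsilon$. \emph{Step 2:} pass from the second slot $y$ to $1$ using Hermitian symmetry and the continuity estimate with $a=y$, $b=1$, $c=xy$: $|\theta(xy,y)-\theta(xy,1)|=|\theta(y,xy)-\theta(1,xy)|\leq\sqrt{2}\,|\theta(y,1)-1|^{1/2}<\sqrt{2}\,\varepsilon^{1/2}$. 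Combining, $|\theta(xy,1)-1|<2\varepsilon+\sqrt{2}\,\varepsilon^{1/2}$.

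\emph{Step 3:} it remains to move from $(xy,1)$ to $(x,1)$. Here the continuity estimate with $a=xy$, $b=x$, $c=1$ gives $|\theta(xy,1)-\theta(x,1)|\leq\sqrt{2}\,|\theta(xy,x)-1|^{1/2}$, so I need $\theta(xy,x)$ close to $1$. For that, apply near-invariance with the element $x^{-1}$: $(x^{-1}.\theta)(xy,x)=\theta(y,1)$, so $|\theta(xy,x)-\theta(y,1)|<\varepsilon$ and thus $|\theta(xy,x)-1|<2\varepsilon$; hence $|\theta(xy,1)-\theta(x,1)|<\sqrt{2}\,(2\varepsilon)^{1/2}=2\varepsilon^{1/2}$. \emph{Step 4:} the triangle inequality yields
\[
|\theta(x,1)-1|\;\leq\;|\theta(x,1)-\theta(xy,1)|+|\theta(xy,1)-1|\;<\;2\varepsilon^{1/2}+\bigl(2\varepsilon+\sqrt{2}\,\varepsilon^{1/2}\bigr)\;=\;2\varepsilon+(2+\sqrt{2})\varepsilon^{1/2},
\]
and since $2+\sqrt{2}<4$ this is $<2\varepsilon+4\varepsilon^{1/2}$, as claimed.

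There is no serious obstacle: the lemma is soft, and the only thing to get right is the bookkeeping — choosing in each step which element to translate by so that a conjugate collapses to something controlled by the hypotheses, keeping track of the Hermitian symmetry when the translation acts on the second coordinate, and verifying that the accumulated constants ($2+\sqrt2$ in front of $\varepsilon^{1/2}$, $2$ in front of $\varepsilon$) fit under $2\varepsilon+4\varepsilon^{1/2}$. One should double-check that the continuity estimate is being invoked with the arguments in the order stated (first coordinate varying, second fixed) and transpose via $\theta(s,t)=\overline{\theta(t,s)}$ whenever it is not.
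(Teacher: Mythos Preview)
Your proof is correct and follows essentially the same approach as the paper's: both combine the pointwise bound $|\theta(gs,gt)-\theta(s,t)|<\varepsilon$ (from $\|\cdot\|_\infty\le\|\cdot\|_{cb}$) with the continuity estimate $|\theta(a,c)-\theta(b,c)|\le\sqrt{2}\,|\theta(a,b)-1|^{1/2}$ to pass from $\theta(y^{-1}xy,1)$ to $\theta(x,1)$ in a few moves. The only difference is routing --- the paper translates by $y^{-1}$ and works through $(y^{-1}x,y^{-1})$ and $(y^{-1}x,1)$, while you translate by $y$ and $x^{-1}$ and work through $(xy,y)$ and $(xy,1)$ --- yielding the slightly sharper constant $2\varepsilon+(2+\sqrt{2})\varepsilon^{1/2}$ versus the paper's $2\varepsilon+4\varepsilon^{1/2}$.
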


\begin{proof}
First, note that for any $g\in G$, $$\vert\theta(gy,g)-1\vert\leq \vert\theta(gy,g)-\theta(y,1)\vert+\vert \theta(y,1) -1\vert<2\varepsilon.$$
We have \begin{align*}
   \vert \theta(x,1)-1\vert  & \leq  \vert \theta(x,1)-\theta(y^{-1}xy,1) \vert+\vert\theta(y^{-1}xy,1)-1\vert \\
     & <  \vert \theta(x,1) -\theta(y^{-1}x,y^{-1})\vert+\vert \theta(y^{-1}x,y^{-1})-\theta(y^{-1}xy,1) \vert+ \varepsilon \\
     & < 2\varepsilon + \vert \theta(y^{-1}x,y^{-1})-\theta(y^{-1}x,1) \vert +\vert \theta(y^{-1}x,1) -\theta(y^{-1}xy,1) \vert\\
     & < 2\varepsilon + \sqrt{2}\vert \theta(1,y^{-1})-1\vert^{1/2}+ \sqrt{2}\vert \theta(y^{-1}xy,y^{-1}x)-1\vert^{1/2}\\
     & < 2\varepsilon + 2\sqrt{2}\left(2\varepsilon\right)^{1/2}.\qedhere
\end{align*}
\end{proof}

We are now ready to prove Theorem \ref{thm:sp4}

\begin{proof}[Proof of Theorem \ref{thm:sp4}]
Let $\varepsilon>0$ and $(K_0,\delta)$ associated to $\varepsilon$ in property $(T_P)$ for the pair $\left(SL_2(\K)\ltimes S^{2*}(\K^2),S^{2*}(\K^2)\right)$. We may assume $\delta<\varepsilon$. Consider $\iota_1$ and $\iota_2$ the embeddings of $SL_2(\K)\ltimes S^{2*}(\K^2)$ into $G=Sp_4(\K)$, and set $K=\iota_1(K_0)\cup \iota_2(K_0)$.\medskip

Let $\theta$ be a normalised positive definite kernel on $Sp_4(\K)$, that we may assume continuous (by a Remark in \cite[Section 3]{Ozawa+2011+89+104}), such that $$\underset{g\in G}{\sup}\Vert g.\theta-\theta\Vert_{cb}<\delta$$and $$\underset{g^{-1}h\in K}{\sup}\vert \theta(g,h)-1\vert<\delta.$$

Then by relative property $(T_P)$ for $(SL_2(\K)\ltimes S^{2*}(\K^2),S^{2*}(\K^2))$, we get that $$\underset{s\in N_2^+\cup N_2^-}{\sup} \vert \theta(s,1)-1\vert.$$Consider the subgroup $$
H=\left\lbrace\begin{pmatrix}
    a &0&b&0\\
    0&1&0&0\\
    c&0&d&0\\
    0&0&0&1
\end{pmatrix}\mid ad-bc=1\right\rbrace\simeq SL_2(\mathbb{K})$$and its two subgroups $N^+=\{g\in H\vert a=d=1,c=0\}$, $N^-=\{g\in H\vert a=d=1,b=0\}$. Since $N^+\cup N^-\subset N_2^+\cup N_2^-$, for any $s\in N^+\cup N^-$, $\vert \theta(s,1)-1\vert<\varepsilon$.

If $g\in H,s\in  N^+\cup N^-$, we have \begin{align*}
    \vert \theta(gs,g)-1\vert^2 & \leq 2\vert \theta(gs,g)-1\vert  \\
     & \leq 2(\vert  \theta(gs,g)-\theta(s,1)\vert+\vert \theta(s,1)-1\vert)\\
     & <  2(\delta+\varepsilon) < 4\varepsilon.
\end{align*}

But every element $g$ in $H$ can be written as a product of at most $3$ elements of $N^+\cup N^-$ (these corresponds to the transvections in $SL_2(\K)$). Thus, we get that for any $g\in H$, $$\vert \theta(g,1)-1\vert \leq 4\varepsilon^{1/2}+\varepsilon=\varepsilon'.$$

For any $\lambda\in \K$, the matrix $d_\lambda=\Diag\left(\lambda,1,\lambda^{-1},1\right)$ is an element of $H$. For $x\in \K$, consider the matrices $$a(x)=\begin{pmatrix}
    1 & x & 0 & 0\\
    0 & 1 & 0 & 0\\
    0 & 0 & 1 & 0\\
    0 & 0 & -x & 1
\end{pmatrix}\quad \mathrm{ and }\quad a'(x)=\begin{pmatrix}
    1 & 0 & 0 & 0\\
    x & 1 & 0 & 0\\
    0 & 0 & 1 & -x\\
    0 & 0 & 0 & 1
\end{pmatrix}.$$
Let $x\in \K$ fixed. If $\lambda^{-1}\to 0$, we have $d_\lambda^{-1} a(x)d_\lambda\to 1$. In particular, by continuity of $\theta$, there is $\lambda$ such that $\vert \theta(d_\lambda^{-1} a(x)d_\lambda,1)-1\vert < \varepsilon'$. Thus, by Lemma \ref{lem:mautner}, we have $$\vert \theta(a(x),1)-1\vert < 2\varepsilon'+4\sqrt{\varepsilon'}=\varepsilon''.$$

Similarly, if $\lambda\to 0$, we have $d_\lambda^{-1} a'(x)d_\lambda\to 1$ and thus $\vert \theta(a'(x),1)-1\vert < \varepsilon''$.\medskip

Finally, there is some integer $\ell$ such that any element $g\in G$ is a product of at most $\ell$ elements of $a(\K)\cup a'(\K)\cup N_2^+\cup N_2^-$ (see \cite{neuhauser}).

Thus for any $g\in G$, \begin{equation*}\vert \theta(g,1)-1\vert\leq 2\ell \sqrt{\varepsilon''}\end{equation*}which shows that $G$ has $(T_P)$.
\end{proof}

\section{Algebraic groups over local fields}
We now know that $SL_3(\mathbb{K})$ and $Sp_4(\mathbb{K})$ have property $(TTT)$. Following the proof of property $(T)$, we want to show that any   almost $\K$-simple algebraic group of rank at least $2$ has $(TTT)$, where $\K$ is a local field. Before that, we need to show that $(TTT)$ is stable under some operations.\medskip

If $G_1,G_2$ are locally compact group, a quasi-homomorphism is Borel map $\varphi:G_1\to G_2$ which is regular (i.e. the image of a comapct subset of $G_1$ is relatively compact) and such that $\left\{\varphi(gh)^{-1}\varphi(g)\varphi(h)\right\}$ is relatively compact. 

\begin{prop}
    \label{prop:stab_image}Let $G_1,G_2$ be two locally compact groups.
    Let $\varphi:G_1\mapsto G_2$ be a surjective quasi-homomorphism. If $G_1$ has $(TTT)$, then $G_2$ has $(TTT)$.
\end{prop}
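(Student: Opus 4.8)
The plan is to pull back a wq-cocycle on $G_2$ to a wq-cocycle on $G_1$ and transfer boundedness. Suppose $b:G_2\to H$ is a wq-cocycle with associated Borel map $\pi:G_2\to\mathcal U(H)$, so that $D:=\sup_{g,h}\Vert b(gh)-b(g)-\pi(g)b(h)\Vert<\infty$ and $b$ is locally bounded; we must show $b$ is bounded on $G_2$. Since $\varphi$ is surjective, it suffices to show that $b\circ\varphi$ is bounded on $G_1$, and the natural strategy is to exhibit $b\circ\varphi$ (possibly after a harmless correction) as a wq-cocycle on $G_1$ and invoke property $(TTT)$ of $G_1$. Set $\tilde b=b\circ\varphi:G_1\to H$ and $\tilde\pi=\pi\circ\varphi:G_1\to\mathcal U(H)$. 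Both are Borel since $\varphi$ is Borel, and $\tilde b$ is locally bounded because $\varphi$ is regular and $b$ is locally bounded. It remains to bound $\sup_{g,h\in G_1}\Vert\tilde b(gh)-\tilde b(g)-\tilde\pi(g)\tilde b(h)\Vert$.

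The key computation is to compare $\varphi(gh)$ with $\varphi(g)\varphi(h)$. Write $c(g,h)=\varphi(gh)^{-1}\varphi(g)\varphi(h)$, which by hypothesis ranges over a relatively compact set $C\subset G_2$; so $\varphi(gh)=\varphi(g)\varphi(h)c(g,h)^{-1}$. Expanding the wq-cocycle defect of $b$ along this factorization:
\begin{align*}
\tilde b(gh)-\tilde b(g)-\tilde\pi(g)\tilde b(h)
&= b\bigl(\varphi(g)\varphi(h)c(g,h)^{-1}\bigr)-b(\varphi(g))-\pi(\varphi(g))b(\varphi(h)).
\end{align*}
Using the defect bound $D$ twice (once to split off $c(g,h)^{-1}$ from $\varphi(g)\varphi(h)$, once to split $\varphi(g)$ from $\varphi(h)$), and the unitarity of $\pi$, one controls this by $2D$ plus terms of the form $\Vert b(\varphi(g)\varphi(h))-b(\varphi(g)\varphi(h)c(g,h)^{-1})\Vert$ and $\Vert b(c(g,h)^{-1})\Vert$. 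The first of these is, again up to $D$, comparable to $\Vert\pi(\varphi(g)\varphi(h))b(c(g,h)^{-1})\Vert=\Vert b(c(g,h)^{-1})\Vert$; and since $c(g,h)^{-1}\in C^{-1}$ with $C^{-1}$ relatively compact and $b$ locally bounded, $\sup_{g,h}\Vert b(c(g,h)^{-1})\Vert=:M<\infty$. Hence $\sup_{g,h\in G_1}\Vert\tilde b(gh)-\tilde b(g)-\tilde\pi(g)\tilde b(h)\Vert\leq 3D+2M<\infty$ (the exact constant is unimportant), so $\tilde b$ is a wq-cocycle on $G_1$.

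Since $G_1$ has property $(TTT)$ — that is, relative property $(TTT)$ for the pair $(G_1,G_1)$ — the wq-cocycle $\tilde b$ is bounded on $G_1$: there is $R$ with $\Vert b(\varphi(g))\Vert\leq R$ for all $g\in G_1$. As $\varphi$ is surjective, every element of $G_2$ is $\varphi(g)$ for some $g$, so $\Vert b(y)\Vert\leq R$ for all $y\in G_2$, i.e. $b$ is bounded on $G_2$. Therefore every wq-cocycle on $G_2$ is bounded, which is precisely property $(TTT)$ for $G_2$. The only delicate point is the bookkeeping in the defect estimate above — one must be careful that every correction term involves either the fixed constant $D$ or the value of $b$ on the relatively compact set $C^{-1}$, and not an a priori unbounded quantity; the regularity of $\varphi$ together with the relative compactness of $C$ is exactly what makes this work.
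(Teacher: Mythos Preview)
Your proof is correct and follows exactly the same route as the paper: pull back the wq-cocycle via $\varphi$, verify it is again a wq-cocycle on $G_1$, apply $(TTT)$ for $G_1$, and conclude by surjectivity. The paper states this in two lines without spelling out the defect estimate, whereas you carry out the bookkeeping explicitly; your constants are slightly loose (one can get $2D+M$), but as you note this is immaterial.
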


\begin{proof}
Let $b$ be Borel wq-cocycle on $G_2$. Then since $\varphi$ is a quasi-homomorphism, $b\circ \varphi$ is a wq-cocycle on $G_1$, hence bounded by $(TTT)$. Since $\varphi$ is surjective, $b$ is bounded.
\end{proof}

\begin{prop}\label{prop:ext}
Let $G$ be a second countable locally compact group, $N\triangleleft G$ a closed normal subgroup. If $N$ and $G/N$ have $(TTT)$, then $G$ has $(TTT)$.    
\end{prop}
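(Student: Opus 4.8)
The plan is to prove that property $(TTT)$ passes to extensions by working with the characterization via property $(T_Q)$ (Theorems \ref{thm:ozawa_impl} and \ref{thm:equiv}), which is often easier to manipulate than wq-cocycles directly, although the statement can also be attacked directly with wq-cocycles. Let me sketch the direct approach first. Let $b:G\to H$ be a Borel wq-cocycle for a Borel map $\pi:G\to \mathcal{U}(H)$, with defect $D=\sup_{g,h}\Vert b(gh)-b(g)-\pi(g)b(h)\Vert<+\infty$. The restriction of $b$ to the closed normal subgroup $N$ is a wq-cocycle on $N$ (the cocycle-type identity only needs $g,h$ in the relevant group, and local boundedness is inherited), so by $(TTT)$ for $N$ it is bounded: $\sup_{n\in N}\Vert b(n)\Vert=:R_N<+\infty$.

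Next I would push $b$ down to $G/N$. The issue is that $b$ is not $N$-invariant, only ``almost'': for $n\in N$ and $g\in G$, writing $gn = (gng^{-1})g$ one gets $\Vert b(gn)-b(g)\Vert\le \Vert \pi(g)b(n)\Vert + \Vert b(gng^{-1})\Vert + 2D \le 2R_N+2D$, so $b$ is constant up to a uniformly bounded error on each coset $gN$. Hence, choosing a Borel section $\sigma:G/N\to G$ (which exists since $G$ is second countable locally compact and $N$ closed, by standard measurable selection), the map $\bar b := b\circ\sigma:G/N\to H$ together with $\bar\pi:=\pi\circ\sigma$ is a wq-cocycle on $G/N$: indeed for $\bar g,\bar h\in G/N$, $\sigma(\bar g)\sigma(\bar h)$ and $\sigma(\bar g\bar h)$ differ by an element of $N$, so $\Vert b(\sigma(\bar g)\sigma(\bar h)) - b(\sigma(\bar g\bar h))\Vert \le 2R_N+2D$, and combining with the wq-defect of $b$ this gives a finite defect for $\bar b$. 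Regularity/local boundedness of $\bar b$ follows from that of $b$ together with the fact that a Borel section can be chosen bounded on compact sets (or one argues via the bounded-cohomology formulation). By $(TTT)$ for $G/N$, $\bar b$ is bounded: $\sup_{\bar g}\Vert \bar b(\bar g)\Vert=:R_{G/N}<+\infty$.

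Finally I would reassemble: for arbitrary $g\in G$, write $g = \sigma(gN)\cdot n$ with $n=\sigma(gN)^{-1}g\in N$. Then $\Vert b(g)\Vert \le \Vert b(\sigma(gN))\Vert + \Vert \pi(\sigma(gN))b(n)\Vert + D \le R_{G/N} + R_N + D$, so $b$ is bounded on all of $G$, proving $(TTT)$ for $G$.

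The main obstacle I anticipate is the measurability/regularity bookkeeping: one must ensure that the Borel section $\sigma$ can be taken so that $\bar b$ and $\bar\pi$ are genuinely Borel and $\bar b$ locally bounded, and that $\bar\pi$ is a Borel map into the unitary group with the strong operator topology. This is where the hypothesis that $G$ is second countable is essential, and where invoking the Lusin--Novikov/measurable selection machinery (as already used in the proof of Theorem \ref{thm:equiv}) cleans things up. An alternative that sidesteps some of this is to argue entirely through property $(T_Q)$: a relative-$(T_Q)$-type quantitative statement for $(N,N)$ and $(G/N, G/N)$ can be combined, estimate by estimate, to yield $(T_Q)$ for $(G,G)$, and then Theorem \ref{thm:equiv} upgrades back to $(TTT)$; but the direct wq-cocycle argument above is more transparent and I would present that, relegating the selection subtleties to a remark or a citation.
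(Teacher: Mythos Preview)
Your argument is correct and follows essentially the same route as the paper: restrict $b$ to $N$ to bound it there, push $b$ down to $G/N$ via a Borel regular section $\sigma$ (the paper cites Mackey's lemma for this, which resolves exactly the measurability/local-boundedness issue you flag), check that $b\circ\sigma$ is a wq-cocycle on $G/N$ and hence bounded, and then reassemble. The only cosmetic difference is that your coset-constancy estimate $\Vert b(gn)-b(g)\Vert\le 2R_N+2D$ is slightly less sharp than needed---the direct bound $\Vert b(gn)-b(g)\Vert\le \Vert\pi(g)b(n)\Vert+D\le R_N+D$ already follows from the defect inequality without invoking $gng^{-1}$---but this does not affect the argument.
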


\begin{proof}Let $b$ be a wq-cocycle on $G$, and let $$D=\underset{g,h\in G}{\sup} \Vert b(gh)-b(g)-\pi(g)b(h)\Vert<+\infty$$ be its defect. Then $b\vert_N$ is a wq-cocyle on $N$, hence bounded by $C$ by property $(TTT)$. By \cite[Lemma 1.1]{mackey}, there exists a Borel section $\sigma:G/N\to G$ which is regular, meaning that the image of any compact subset of $G/N$ is relatively compact in $G$. Denote $n_g=g {-1}\sigma(gN)$. Set $\Tilde{b}=b\circ\sigma$ and $\Tilde{\pi}=\pi\circ \sigma$. Then $\Tilde{b}$ is a wq-cocycle on $G/N$ associated to $\Tilde{\pi}$. Indeed, if $g,h\in G$, 
\begin{multline*}
\Vert \Tilde{b}(ghN)-\Tilde{b}(gN)-\Tilde{\pi}(gN)\Tilde{b}(hN)\Vert\\
\begin{aligned}
     & = \Vert b(ghn_{gh})-b(gn_g)-\pi(gn_g)b(hn_h)\Vert\\
     &\leq \Vert b(ghn_{gh})-b(gn_ghn_h)\Vert+D\\
     &\leq \Vert b(ghn_{gn})-b(ghn')\Vert+D\\
    & \leq \Vert b(ghn_{gh})-b(gh)-\pi(gh)b(n_{gh)}\Vert+\Vert b(gh)-b(ghn')\Vert+\Vert b(n_{gh})\Vert+D\\
    & \leq \Vert b(ghn')-b(gh)-\pi(gh)b(n')\Vert+\Vert b(n')\Vert + 2D+C\\
    & \leq 3D+2C,
\end{aligned}
\end{multline*}
using that $N$ is a normal subgroup, thus $h^{-1}n_gh\in N$. Since $G/N$ has property $(TTT)$, $\Tilde{b}$ is bounded by $C'$. Thus, for any $g\in G$, $$\Vert b(g)\Vert \leq \Vert b(gn_g)-b(g)-\pi(g)b(n_g)\Vert+ \Vert b(gn_g)\Vert+\Vert b(n_g)\Vert \leq D+C+C'$$so $b$ is bounded on $G$, and thus $G$ has property $(TTT)$.
\end{proof}

In \cite[Thm. 6]{Ozawa+2011+89+104}, Ozawa showed that a lattice in a group with property $(TTT)$ inherits property $(TTT)$. In fact, his proof also shows the following results.
\begin{theorem}{\cite[Thm. 6]{Ozawa+2011+89+104}}\label{lem:cocompactsubgroup}
    Let $H$ be a closed subgroup of $G$ locally compact second countable such that there exists a finite Borel measure on $G/H$ invariant under the action of $G$. If $G$ has property $(TTT)$, then $H$ has property $(TTT)$.
\end{theorem}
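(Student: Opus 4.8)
Since properties $(T_P)$, $(TTT)$ and $(T_Q)$ are all equivalent (Theorems~\ref{thm:ozawa_impl} and~\ref{thm:equiv}), it suffices to prove that \emph{relative property $(T_Q)$ passes from $(G,G)$ to $(H,H)$}. This is the analogue, in the ``almost representation'' setting, of the classical fact that property $(T)$ is inherited by closed subgroups with finite covolume, and the plan is to imitate that proof by \emph{induction over $G/H$}. Working with $(T_Q)$ (or $(T_P)$) rather than directly with wq-cocycles is what makes this possible: the datum of $(T_Q)$ is a \emph{unit} vector that is almost invariant, so it can be induced over the finite-measure base $G/H$ and the induced vector is again square-integrable, whereas an unbounded wq-cocycle inducted by the same recipe would fail to lie in $L^2(G/H)$ as soon as $G/H$ is noncompact.

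Fix a finite $G$-invariant Borel measure $\mu$ on $X=G/H$, normalised so that $\mu(X)=1$, a regular Borel section $\sigma\colon X\to G$ of the quotient map (which exists by \cite[Lemma~1.1]{mackey}) with $\sigma(\bar e)=e$, and the associated Mackey cocycle $c(g,p)=\sigma(gp)^{-1}g\sigma(p)\in H$, which satisfies $c(g_1g_2,p)=c(g_1,g_2p)c(g_2,p)$. Given a Borel map $\pi\colon H\to\U(\mathcal H)$ and a unit vector $\xi$ with small defect and with $\xi$ almost invariant over a compact subset of $H$, set $\widetilde{\mathcal H}=L^2(X,\mu;\mathcal H)$, define $\widetilde\pi\colon G\to\U(\widetilde{\mathcal H})$ by $(\widetilde\pi(g)F)(p)=\pi\bigl(c(g^{-1},p)^{-1}\bigr)F(g^{-1}p)$ — which is unitary for every $g$ because $\mu$ is $G$-invariant, regardless of whether $\pi$ is multiplicative — and let $\widetilde\xi$ be the constant field $\widetilde\xi(p)\equiv\xi$, so $\|\widetilde\xi\|^2=\mu(X)=1$. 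Using the cocycle identity one checks that $\|\widetilde\pi(g_1g_2)\widetilde\xi-\widetilde\pi(g_1)\widetilde\pi(g_2)\widetilde\xi\|^2=\int_X\|\pi(ab)\xi-\pi(a)\pi(b)\xi\|^2\,d\mu(p)$ for suitable $a,b\in H$ depending on $g_1,g_2,p$, so this is bounded by the defect of $(\pi,\xi)$, and that $\|\widetilde\pi(g)\widetilde\xi-\widetilde\xi\|^2=\int_X\|\pi(c(g^{-1},p)^{-1})\xi-\xi\|^2\,d\mu(p)$.

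The technical heart is to verify, for an arbitrary prescribed compact $K_G\subset G$, that $\|\widetilde\pi(g)\widetilde\xi-\widetilde\xi\|$ is small for every $g\in K_G$, knowing only that $\|\pi(h)\xi-\xi\|$ is small for $h$ in a compact $K_H\subset H$ that we are free to choose. The point is to find a single $K_H$ with $\mu\bigl(\{p\in X:c(g^{-1},p)\notin K_H\text{ for some }g\in K_G\}\bigr)$ as small as desired; then the part of the integral over this set contributes at most a multiple of that measure, and on its complement $\|\pi(c(g^{-1},p)^{-1})\xi-\xi\|$ is small by almost-invariance of $\xi$ over $K_H$. This is exactly where the finiteness of $\mu$ enters: by inner regularity of $\mu$ and Lusin's theorem there is a compact $C\subset X$, with $\mu(X\setminus C)$ arbitrarily small, on which $\sigma$ is continuous; then $\sigma(C)$, $K_G^{-1}\sigma(C)$ and $\sigma$ of the relevant enlargement of $C$ are relatively compact in $G$, and since $H$ is \emph{closed} in $G$ the cocycle values $c(g^{-1},p)=\sigma(g^{-1}p)^{-1}g^{-1}\sigma(p)$ for $p\in C$, $g\in K_G$, lie in a fixed compact subset $K_H\subset H$. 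Thus $(\widetilde\pi,\widetilde\xi)$ satisfies the hypotheses \eqref{eq:tq_h1} and \eqref{eq:tq_h2} of relative property $(T_Q)$ for $(G,G)$, which therefore yields $\sup_{x\in G}\|\widetilde\pi(x)\widetilde\xi-\widetilde\xi\|<\varepsilon$.

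The remaining step — transferring this uniform bound back to $(\pi,\xi)$ on $H$ — is the one I expect to be the main obstacle. For $h\in H$ and $p$ in a neighbourhood $W$ of $\bar e$ (which $h$ fixes, as $h\in H$), continuity of $\sigma$ near $\bar e$ together with closedness of $H$ force $c(h^{-1},p)=h^{-1}$ on $W$, whence $\mu(W)\,\|\pi(h^{-1})^{-1}\xi-\xi\|^2\le\|\widetilde\pi(h)\widetilde\xi-\widetilde\xi\|^2<\varepsilon^2$; but $W=W_h$ shrinks as $h$ leaves every compact set, so this naive read-off loses a factor $\mu(W_h)^{-1/2}$ and is not uniform in $h$. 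The remedy is to extract $\|\pi(h)\xi-\xi\|$ not by evaluating near $\bar e$ but from an average over $p$ in a \emph{fixed} large-measure subset of $X$, using the cocycle relation to write $c(h^{-1},p)$ in terms of $h^{-1}$ modulo elements over which $\xi$ is almost invariant, and exploiting that the conclusion of relative $(T_Q)$ for $(G,G)$ is already uniform over all of $G$. Carried out carefully this gives $\sup_{h\in H}\|\pi(h)\xi-\xi\|<\varepsilon'$, i.e.\ relative property $(T_Q)$ for $(H,H)$, hence property $(TTT)$ for $H$ by Theorem~\ref{thm:ozawa_impl}.
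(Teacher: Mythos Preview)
The paper does not give its own proof of this statement; it simply records that Ozawa's argument for \cite[Thm.~6]{Ozawa+2011+89+104} already yields the finite-covolume version. Your overall strategy---pass through the equivalence of Theorems~\ref{thm:ozawa_impl} and~\ref{thm:equiv} and run an induction over $G/H$---is exactly the paper's viewpoint, and the set-up of $(\widetilde\pi,\widetilde\xi)$ together with the verification of \eqref{eq:tq_h1} and \eqref{eq:tq_h2} is correct.

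The gap is in the extraction step, and it is not merely cosmetic. Your parenthetical ``which $h$ fixes, as $h\in H$'' is false beyond the single point $\bar e$: the equation $h^{-1}\!\cdot gH=gH$ forces $h\in gHg^{-1}$, so a general $h\in H$ fixes no neighbourhood of $\bar e$, and $c(h^{-1},p)=\sigma(h^{-1}p)^{-1}h^{-1}\sigma(p)$ equals $h^{-1}$ only at $p=\bar e$. Hence the inequality $\mu(W)\,\|\pi(h^{-1})^{-1}\xi-\xi\|^2\le\varepsilon^2$ never holds on a set of positive measure. Your proposed remedy---writing $c(h^{-1},p)$ as $h^{-1}$ flanked by elements ``over which $\xi$ is almost invariant''---does not work either: the flanking factors $\sigma(p)$ and $\sigma(h^{-1}p)^{-1}$ lie in $G$, not in $H$, so $\pi$ is undefined on them; and once $h$ leaves every compact set the point $h^{-1}p$ escapes any fixed compact $C\subset X$ on which $\sigma$ is controlled, so no fixed large-measure set helps. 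In short, from $\int_X\|\pi(c(h^{-1},p)^{-1})\xi-\xi\|^2\,d\mu(p)<\varepsilon^2$ one cannot read off $\|\pi(h)\xi-\xi\|$ by the mechanism you describe.

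One clean way around this is to run the induction on the $(T_P)$ side instead: with $\alpha(g)=\sigma(gH)^{-1}g\in H$ one has $\alpha|_H=\mathrm{id}_H$, so if $\widetilde\theta(x,y)=\theta(\alpha(x),\alpha(y))$ then the conclusion $\sup_{x,y\in G}|\widetilde\theta(x,y)-1|<\varepsilon$ restricts \emph{immediately} to $\sup_{a,b\in H}|\theta(a,b)-1|<\varepsilon$, and the finite invariant measure is used instead to establish the almost-invariance hypothesis \eqref{eq:tp_h1} for $\widetilde\theta$. This is the shape of Ozawa's argument and explains why the paper can simply cite it.
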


We will now turn to algebraic groups. By algebraic group, we will always mean an affine algebraic group realised as an algebraic subgroup of $GL_n$. We will use the notations of \cite[Ch. I]{margulis1991discrete}, where more details can be found.

\begin{lem}\label{lem:simplyconnected}
    Let $\K$ be a local field, $G$ a connected semisimple $\K$-group and $\Tilde{G}$ its simply connected cover (in the algebraic sense). Then $G(\K)$ has $(TTT)$ if and only if $\Tilde{G}(\K)$ has $(TTT)$.
\end{lem}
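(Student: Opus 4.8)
The plan is to exploit the standard algebraic fact that the central isogeny $\pi:\Tilde{G}\to G$ defined over $\K$ induces a map $\pi_\K:\Tilde{G}(\K)\to G(\K)$ whose kernel is finite (contained in the finite center of $\Tilde{G}(\K)$) and whose image has finite index in $G(\K)$, with the quotient $G(\K)/\pi_\K(\Tilde{G}(\K))$ being finite (this is the content of \cite[Ch. I]{margulis1991discrete}; over a local field the cokernel embeds in a Galois cohomology group which is finite). The strategy is then purely a matter of combining the stability results already established: Proposition \ref{prop:stab_image} (passage to a surjective quasi-homomorphic image), Proposition \ref{prop:ext} (extensions), and Theorem \ref{lem:cocompactsubgroup} (passage to a closed subgroup carrying an invariant finite measure on the quotient, in particular a finite-index subgroup or a cocompact lattice), together with the elementary observation that finite groups have $(TTT)$ since every wq-cocycle is automatically bounded on a compact set.

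First, for the direction ``$\Tilde{G}(\K)$ has $(TTT)$ $\Rightarrow$ $G(\K)$ has $(TTT)$'': let $H=\pi_\K(\Tilde{G}(\K))$, which is a closed finite-index, hence cocompact and open, subgroup of $G(\K)$. Since $\pi_\K:\Tilde{G}(\K)\to H$ is a continuous surjective homomorphism (in particular a surjective quasi-homomorphism) with $\Tilde{G}(\K)$ having $(TTT)$, Proposition \ref{prop:stab_image} gives that $H$ has $(TTT)$. Now $H$ is normal in $G(\K)$ (it is the image of the connected group, or one checks normality directly) with finite quotient $G(\K)/H$; a finite group trivially has $(TTT)$, so Proposition \ref{prop:ext} applied to the extension $1\to H\to G(\K)\to G(\K)/H\to 1$ yields that $G(\K)$ has $(TTT)$. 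If $H$ happens not to be normal, one replaces it by its normal core (still finite index) or, more simply, invokes Theorem \ref{lem:cocompactsubgroup} in the reverse role after first noting $G(\K)$ has $(TTT)$ would follow — but the cleanest route is the normal-core argument, so I would use that.

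For the converse, ``$G(\K)$ has $(TTT)$ $\Rightarrow$ $\Tilde{G}(\K)$ has $(TTT)$'': again set $H=\pi_\K(\Tilde{G}(\K))$, a finite-index closed subgroup of $G(\K)$. Since $G(\K)$ has $(TTT)$ and $H$ has finite index, $G(\K)/H$ carries the (finite, hence $G$-invariant) counting measure, so Theorem \ref{lem:cocompactsubgroup} shows $H$ has $(TTT)$. Finally $\pi_\K:\Tilde{G}(\K)\to H$ is a surjective continuous homomorphism with finite central kernel $Z$; one has the extension $1\to Z\to\Tilde{G}(\K)\to H\to 1$ with $Z$ finite (hence $(TTT)$) and $H$ having $(TTT)$, so Proposition \ref{prop:ext} gives that $\Tilde{G}(\K)$ has $(TTT)$.

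The only genuinely non-formal ingredient is the input from the structure theory of algebraic groups over local fields: namely that for a central $\K$-isogeny the induced map on $\K$-points has finite kernel and finite cokernel, and that the relevant subgroups are closed. This is standard (it is exactly what is used in the classical proof that property $(T)$ passes between $G(\K)$ and $\Tilde{G}(\K)$, cf. \cite{margulis1991discrete}), so I expect the ``obstacle'' to be merely bookkeeping about normality of $H$ — handled by passing to the normal core — rather than anything deep. Everything else is an immediate assembly of Propositions \ref{prop:stab_image}, \ref{prop:ext} and Theorem \ref{lem:cocompactsubgroup}.
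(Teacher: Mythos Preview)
Your approach is the same as the paper's: reduce both directions to the image $H=\pi_\K(\Tilde{G}(\K))$, then use Proposition~\ref{prop:stab_image}, Proposition~\ref{prop:ext}, and Theorem~\ref{lem:cocompactsubgroup} together with the finiteness of $\ker\pi_\K$.

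There is one factual slip. You assert that $H$ has \emph{finite index} in $G(\K)$, appealing to finiteness of the Galois cohomology of the kernel. This is true in characteristic zero, but fails over local fields of positive characteristic: for instance, with $\K=\mathbb{F}_2((t))$ and the isogeny $SL_2\to PGL_2$, the cokernel is $\K^*/(\K^*)^2$, which is infinite (no odd power of $t$ is a square). What \emph{is} true in general, and what the paper invokes via \cite[Ch.~I, Thm.~2.3.4]{margulis1991discrete}, is that $H$ is closed and normal with $G(\K)/H$ \emph{compact}. This is all you need: compact groups trivially have $(TTT)$, so Proposition~\ref{prop:ext} still applies for the forward direction, and Theorem~\ref{lem:cocompactsubgroup} covers cocompact (not just finite-index) subgroups for the converse. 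So your argument survives once ``finite index'' is replaced by ``cocompact'' throughout.

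Your hedging about normality and the normal-core workaround is also unnecessary (and would not survive the correction above, since an infinite-index subgroup need not have a useful normal core). The image $H$ is always normal: the connecting map $G(\K)\to H^1(\K,\mu)$ in the long exact sequence is a homomorphism into an abelian group, and $H$ is its kernel.
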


\begin{proof}
    Let $\pi:\Tilde{G}\to G$ be a central $\K$-isogeny. Then by \cite[Ch. I, Thm. 2.3.4]{margulis1991discrete}, $\pi(\Tilde{G}(\K))$ is a closed normal subgroup of $G(\K)$ such that $G(\K)/\pi(\Tilde{G}(\K))$ is compact (thus has $(TTT)$ as well as a finite Borel measure invariant by $G(\K)$). By Proposition \ref{prop:ext}, $\pi(\Tilde{G}(\K))$ has $(TTT)$ implies $G(\K)$ has $(TTT)$. Conversely, by Theorem \ref{lem:cocompactsubgroup}, if $G(\K)$ has $(TTT)$, so does $\pi(\Tilde{G}(\K))$.\medskip
    
    Furthermore, $\Tilde{G}(\K)/(\ker\pi)(\K)\to \pi(\Tilde{G}(\K))$ is a homeomorphism. Thus, since the subgroup $(\ker\pi)(\K)$ is finite hence has $(TTT)$, by Propositions \ref{prop:stab_image} and \ref{prop:ext}, $\Tilde{G}(\K)$ has $(TTT)$ if and only if $\pi(\Tilde{G}(\K))$ has $(TTT)$.
\end{proof}

In \cite{cornulier}, Yves de Cornulier studied lengths on algebraic groups and showed the following theorem. A semigroup length on $G$ is a map $\ell:G\to \R_+$ which is locally bounded and such that $\forall x,y\in G$, $\ell(xy)\leq \ell(x)+\ell(y)$.
\begin{theorem}{\cite[Thm. 1.4]{cornulier}}
    Let $G$ be an almost $\mathbb{K}$-simple algebraic group over a local field $\mathbb{K}$, then every semigroup length on $G(\mathbb{K})$ is bounded or proper. 
\end{theorem}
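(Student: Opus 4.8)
The plan is to reconstruct de Cornulier's argument, which follows the template of the proof of property $(T)$: reduce to the isotropic simply connected case, transfer everything to the maximal $\K$-split torus via the polar decomposition, and then exploit the scaling rigidity of unipotent subgroups together with bounded generation by root subgroups.

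First I would dispose of the easy cases: if $G$ is $\K$-anisotropic then $G(\K)$ is compact, so every locally bounded $\ell$ is bounded; and if $\ell$ is proper there is nothing to prove. So assume $\rank_\K G\geq 1$ and $\ell$ not proper, aiming to show $\ell$ is bounded. Restricting $\ell$ to the closed cocompact normal subgroup $\pi(\widetilde G(\K))$ (with $\widetilde G\to G$ the simply connected central isogeny, as in the proof of Lemma~\ref{lem:simplyconnected}) and using that being bounded-or-proper is insensitive both to passage to a cocompact subgroup and to pull-back along the proper quotient $\widetilde G(\K)\to\widetilde G(\K)/(\ker\pi)(\K)$, we may assume $G$ simply connected; then $G(\K)$ is generated by the relative root subgroups $U_\beta(\K)$, and in fact $G(\K)=U_{\beta_1}(\K)\cdots U_{\beta_N}(\K)$ for a fixed finite word of roots (bounded generation over a field; this is the analogue of the fact used for $Sp_4(\K)$ above, see \cite{neuhauser}). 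Fix a maximal $\K$-split torus $A$, the positive chamber $A^+$, and a maximal compact $K$ meeting every relative Weyl coset, with $G(\K)=KA^+K$. Since $C_0:=\sup_K\ell<\infty$ we get $|\ell(g)-\ell(a)|\leq 2C_0$ whenever $g=kak'$; hence $\ell$ is controlled up to an additive constant by $\ell|_A$, which is moreover $W$-invariant up to the additive constant $2C_0$, and non-properness forces (through the polar decomposition, and using the $W$-action to also bound inverses) a sequence $a_n\to\infty$ in $A$ with $\sup_n\big(\ell(a_n)+\ell(a_n^{-1})\big)<\infty$.

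The core is the scaling rigidity. Since $\|a_n\|\to\infty$, after passing to a subsequence there is a root $\gamma_0$ with $|\gamma_0(a_n)|\to\infty$, and then for every $s\in\K$, choosing $n$ with $|\gamma_0(a_n)^{-1}s|\leq 1$ and using $a_n u_{\gamma_0}(t)a_n^{-1}=u_{\gamma_0}(\gamma_0(a_n)t)$,
\begin{equation*}
\ell\big(u_{\gamma_0}(s)\big)\leq \ell(a_n)+\ell(a_n^{-1})+\sup_{|t|\leq 1}\ell\big(u_{\gamma_0}(t)\big),
\end{equation*}
a bound independent of $s$; so $\ell$ is bounded on $U_{\gamma_0}(\K)$. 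Conjugating by Weyl representatives in $K$ bounds $\ell$ on $U_\beta(\K)$ for all $\beta$ in the $W$-orbit of $\gamma_0$; in the simply-laced types this is already all roots, while in the multiply-laced types one first bounds $\ell$ on the subtorus generated by $\{h_\beta(\K^\times):\beta\in W\gamma_0\}$ (writing each $h_\beta(t)$ as a bounded product of elements of $U_{\pm\beta}(\K)$) and then reruns the displayed estimate, conjugating a root subgroup of the second orbit by the $h_\beta(t)$ whose scaling character there is non-trivial. Finally, $\ell(g)\leq\sum_{i=1}^N\sup_{U_{\beta_i}(\K)}\ell<\infty$ by bounded generation, so $\ell$ is bounded.

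I expect the main obstacle to lie in the two places where I was cavalier. For a genuinely asymmetric $\ell$ one must actually produce $a_n\to\infty$ with both $\ell(a_n)$ and $\ell(a_n^{-1})$ bounded: this uses the $W$-invariance-up-to-constants, the fact that $-\beta\in W\beta$ for every root (via the reflection $s_\beta$), and some care about which wall of $A^+$ the $a_n$ escape along. And the multiply-laced bookkeeping requires checking, uniformly over types $B_n,C_n,F_4,G_2$, that the subtorus generated by $\{h_\beta(\K^\times):\beta\in W\gamma_0\}$ acts with unbounded scaling character on the complementary root orbit. Pinning down a uniform statement of bounded generation of $G(\K)$ by root subgroups, valid for all almost $\K$-simple $G$ over all local fields, is the last technical ingredient; it is exactly what underlies the treatment of $Sp_4(\K)$ earlier.
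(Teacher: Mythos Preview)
The paper does not prove this theorem at all: it is quoted verbatim as \cite[Thm.~1.4]{cornulier} and used as a black box in the proof of Theorem~\ref{thm:mainthmP} (one observes that $g\mapsto\Vert b(g)\Vert+C$ is a semigroup length, shows it is not proper because $b$ is bounded on the non-compact subgroup $H(\K)$, and concludes it is bounded). There is therefore nothing in the present paper to compare your argument against.

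That said, your sketch is a reasonable outline of the strategy one finds in de Cornulier's paper, and you are honest about where the real work lies. The obstacle you flag in the asymmetric case is genuine and is not disposed of by the sentence you wrote: the fact that $-\beta\in W\beta$ for every root does \emph{not} imply that $a^{-1}$ lies in the $W$-orbit of $a$ (indeed $-1\notin W$ in types $A_{n\geq 2}$, $D_{2n+1}$, $E_6$), so ``$W$-invariance up to constants'' of $\ell|_A$ does not by itself bound $\ell(a_n^{-1})$ from $\ell(a_n)$. De Cornulier circumvents this by a different mechanism (working with distorted one-parameter subgroups and a careful propagation through the root system rather than needing a two-sided bound on a single sequence in $A$); your sketch would need a comparable device before the displayed inequality for $\ell(u_{\gamma_0}(s))$ becomes available. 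The bounded-generation and multiply-laced steps you list are likewise real ingredients that require the references you allude to, not just a remark.
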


To prove Theorem \ref{thm:mainthm}, we will show using $(TTT)$ on $SL_3$ and $Sp_4$ that a certain length is not proper, thus is bounded.

\begin{theorem}\label{thm:mainthmP}
    Let $\K$ be a local field, $G$ a connected almost $\K$-simple $\K$-group with $\rank_\K G \geq 2$. Then $G(\K)$ has property $(T_P)$.
\end{theorem}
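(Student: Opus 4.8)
The plan is to reduce the general higher-rank almost $\K$-simple group to the two already-established cases $SL_3$ and $Sp_4$, following the standard strategy used to prove property $(T)$ for higher-rank groups (as in Margulis or Bekka--de la Harpe--Valette), but carried out with the ``almost invariance'' bookkeeping that $(T_P)$ requires. First I would reduce to the simply connected case: by Lemma~\ref{lem:simplyconnected}, $G(\K)$ has $(TTT)$ --- equivalently, by Theorem~\ref{thm:equiv} and Theorem~\ref{thm:ozawa_impl}, property $(T_P)$ --- if and only if $\Tilde G(\K)$ does, so we may assume $G$ simply connected. Then $G$ decomposes as an almost direct product of its $\K$-simple factors, and by Proposition~\ref{prop:ext} it suffices to treat each $\K$-simple factor; since we are in rank $\geq 2$, each such factor is again of $\K$-rank $\geq 2$ (here one uses that the relevant factor is the one carrying the rank, or more carefully that a simply connected almost $\K$-simple group of $\K$-rank $\geq 2$ is our target).

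Next, the core step: given a simply connected $\K$-simple group $G$ with $\rank_\K G \geq 2$, I would use the structure of the relative root system $\Phi = \Phi(G, S)$ for a maximal $\K$-split torus $S$. Since $\rank_\K G \geq 2$, the root system $\Phi$ has rank $\geq 2$, so it contains a rank-$2$ irreducible sub-root-system, which is of type $A_2$, $B_2 = C_2$, $BC_2$, or $G_2$. For each pair of non-proportional roots $\alpha, \beta$ spanning such a subsystem, the subgroup generated by the root subgroups $U_{\pm\alpha}, U_{\pm\beta}$ (together with the torus) is, up to isogeny, a quotient-or-subgroup configuration that admits a surjective quasi-homomorphism from $SL_3(\K)$ or $Sp_4(\K)$ onto a subgroup whose $(T_P)$ we can leverage --- more precisely, I would identify inside $G(\K)$ copies of the relevant rank-one-times-abelian pairs with relative property $(T)$ (hence $(T_P)$ by \cite[Prop.~3]{Ozawa+2011+89+104}), coming from the embeddings analogous to $\iota_1, \iota_2$ in the $Sp_4$ section. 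Running the Mautner-type argument (Lemma~\ref{lem:mautner}) along the torus $S$, which acts on each root subgroup by a nontrivial character, one propagates ``$\theta$ is almost $1$'' from the root subgroups with relative $(T)$ to all root subgroups $U_\alpha$, $\alpha \in \Phi$.

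The final step is a bounded-generation statement: for $G$ simply connected and $\K$-isotropic, $G(\K)$ is generated by its $\K$-root subgroups $\{U_\alpha(\K) : \alpha \in \Phi\}$ --- this is a theorem of Borel--Tits --- and in fact every element is a product of boundedly many elements from $\bigcup_\alpha U_\alpha(\K)$ (this uniform bound is exactly the kind of statement proved by de Cornulier \cite{cornulier} and Neuhauser \cite{neuhauser}; indeed, the cleanest route is to invoke \cite[Thm.~1.4]{cornulier} directly: the function $g \mapsto |\theta(g,1) - 1|$-type data gives a semigroup length, which by that theorem is either bounded or proper, and we will have shown it is not proper because it is bounded on a generating set). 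Concretely, once $|\theta(s,1) - 1| < \varepsilon'$ for all $s$ in a generating set of root subgroups, additivity along products (as in the $Sp_4$ proof, where every element of $H$ is a product of $\leq 3$ transvections and every element of $G$ a product of $\leq \ell$ such pieces) gives $|\theta(g,1) - 1| \leq C(\ell)\sqrt{\varepsilon'}$ for all $g \in G(\K)$, which is property $(T_P)$.

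The main obstacle I expect is the second step: carefully matching up the abstract rank-$2$ subsystems of $\Phi$ with concrete subgroups to which $SL_3(\K)$ or $Sp_4(\K)$ (or a group with the relevant relative property $(T)$) maps, uniformly over all the types that can occur (including the non-reduced $BC_n$ systems that arise for some quasi-split unitary groups over non-archimedean fields), and checking that the Mautner argument's torus elements genuinely satisfy the contraction hypothesis $d_\lambda^{-1} u\, d_\lambda \to 1$ needed for Lemma~\ref{lem:mautner}; this requires knowing that the relevant character of $S$ is nontrivial on the $\K$-points, which is where isotropy and the rank-$\geq 2$ hypothesis are essential. The bounded-generation input, while nontrivial, can be black-boxed from \cite{cornulier} and \cite{neuhauser}.
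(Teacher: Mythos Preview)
Your plan is correct in outline and would ultimately work, but it is considerably more laborious than the paper's argument, and the shortcut you glimpse in your parenthetical about \cite[Thm.~1.4]{cornulier} is not quite applied to the right object.

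The paper does not carry out any root-system analysis, any case-by-case rank-$2$ subsystem identification, or any Mautner propagation inside the general $G$. Instead it works directly in the $(TTT)$ picture (equivalent to $(T_P)$ by Theorems~\ref{thm:ozawa_impl} and~\ref{thm:equiv}). By \cite[Ch.~I, Prop.~1.6.2]{margulis1991discrete}, $G$ contains an almost $\K$-simple $\K$-subgroup $H$ whose simply connected cover is $SL_3$ or $Sp_4$; then Lemma~\ref{lem:simplyconnected} plus the known cases give that $H(\K)$ has $(TTT)$. Now take any wq-cocycle $b$ on $G(\K)$ with defect $C$: the function $\ell(g)=\Vert b(g)\Vert + C$ is a locally bounded semigroup length on $G(\K)$. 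Cornulier's dichotomy says $\ell$ is bounded or proper; since $b\vert_{H(\K)}$ is bounded and $H(\K)$ is non-compact, $\ell$ is not proper, hence bounded. That is the whole proof.

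Two points of contrast with your proposal. First, your suggested application of Cornulier is to the function $g\mapsto |\theta(g,1)-1|$ coming from a positive definite kernel, but this is not obviously a semigroup length (there is no reason $|\theta(gh,1)-1|\leq |\theta(g,1)-1|+|\theta(h,1)-1|$); the trick is to pass to the wq-cocycle formulation, where subadditivity of $\Vert b(\cdot)\Vert + C$ is immediate from the defect inequality. Second, the paper does not need to reduce $G$ itself to the simply connected case, nor to decompose into simple factors, nor to find bounded generation by root subgroups in $G$: all of that is absorbed into the single black box that $G$ contains some $H$ with $(TTT)$ and $H(\K)$ non-compact. Your approach redoes for general $G$ what the paper already did for $Sp_4$, whereas the paper leverages the $Sp_4$/$SL_3$ result wholesale via the length dichotomy.
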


\begin{proof}
    By \cite[Ch.I, Prop. 1.6.2]{margulis1991discrete}, $G$ contains an almost $\K$-simple $\K$-subgroup $H$ whose (algebraic) simply connected cover is $SL_3$ or $Sp_4$. Thus, by \cite[Thm.5]{Ozawa+2011+89+104}, Theorem \ref{thm:sp4} and Lemma \ref{lem:simplyconnected}, $H(\K)$ has property $(TTT)$.\medskip

 Let $b$ be a wq-cocycle on $G(\K)$. Let $C=\underset{g,h\in G}{\sup} \left\Vert b(gh)-b(g)-\pi(g)b(h)\right\Vert<+\infty$. Then $b\vert_{H(\K)}$ is a wq-cocycle on $H(\K)$ hence is bounded by $(TTT)$.\medskip

Consider the function $\ell:g\mapsto \Vert b(g)\Vert+C$. We have that $\ell(gh)\leq \ell(g)+\ell(h)$. Furthermore, $\ell$ is locally bounded since by definition $b$ is. Then, by \cite[Thm. 1.4]{cornulier}, $\ell$ is either proper or bounded. But $b$ is bounded on $H(\K)$ which is not relatively compact, thus $b$ is bounded.
\end{proof}

\begin{remark}
    Let $G$ be a connected simple Lie group with finite center of rank at least $2$. Then $G$ is locally isomorphic to the group of $\R$-point of an almost-$\R$-simple algebraic group, thus has $(TTT)$.
\end{remark}

\begin{coro}
    Let $\K$ be a local field, $G$ a connected almost $\K$-simple $\K$-group with $\rank_\K G \geq 2$. Let $\Gamma$ be a lattice in $G(\K)$, then $\Gamma$ has $(TTT)$.
\end{coro}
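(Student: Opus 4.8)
The plan is simply to combine the two main results already proved in the paper. First I would note that $G(\K)$, being (by definition of algebraic group here) a closed algebraic subgroup of some $GL_n$, is realised as a closed subgroup of $GL_n(\K)$; since $\K$ is a local field, hence second countable, $GL_n(\K)$ is second countable locally compact, and therefore so is $G(\K)$. This is what is needed to apply the machinery of Section~2. By Theorem~\ref{thm:mainthmP}, $G(\K)$ has property $(T_P)$, and by Ozawa's implications recorded in Theorem~\ref{thm:ozawa_impl} we have $(T_P)\Rightarrow(TTT)$, so $G(\K)$ has property $(TTT)$.

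Next I would invoke Theorem~\ref{thm:latticesTTT}: $\Gamma$ is a lattice in the second countable locally compact group $G(\K)$, which has property $(TTT)$, so the equivalence of that theorem gives that $\Gamma$ has property $(TTT)$. Equivalently, one can bypass Theorem~\ref{thm:latticesTTT} and argue through the relative properties: Ozawa showed (as recalled just before the statement of Theorem~\ref{thm:latticesTTT}) that relative property $(T_P)$ passes to lattices, cocompact or not, so $\Gamma$ inherits $(T_P)$ from $G(\K)$, and then Theorem~\ref{thm:ozawa_impl} again yields $(TTT)$ for $\Gamma$. Either route is immediate once Theorem~\ref{thm:mainthmP} is available.

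There is essentially no obstacle remaining at this point: all the content is in Theorem~\ref{thm:sp4} (property $(T_P)$ for $Sp_4(\K)$), the reduction to $SL_3$ and $Sp_4$ in the proof of Theorem~\ref{thm:mainthmP} via \cite[Ch.~I, Prop.~1.6.2]{margulis1991discrete} and de Cornulier's length dichotomy, and the equivalence of $(T_P)$, $(TTT)$, $(T_Q)$ established in Theorem~\ref{thm:equiv}. The only genuinely new verification the corollary demands is the second countability of $G(\K)$, which is routine, and the observation that $G$ almost $\K$-simple of $\K$-rank $\geq 2$ is non-compact (so that lattices in it are non-trivial and the statement is not vacuous), which is standard from the existence of a $\K$-split torus of dimension $\geq 2$.
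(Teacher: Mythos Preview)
Your proposal is correct and follows exactly the paper's approach: apply Theorem~\ref{thm:mainthmP} to get $(TTT)$ for $G(\K)$ and then invoke the fact that $(TTT)$ passes to lattices (Theorem~\ref{thm:latticesTTT}). The paper's proof is a one-line version of what you wrote; your additional remarks on second countability and the alternative route via Ozawa's inheritance of $(T_P)$ are fine elaborations but not required.
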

\begin{proof}
    This is a direct consequence of the theorem and the fact that $(TTT)$ passes to lattices.
\end{proof}

Let $\varphi:G\to G'$ be a quasi-homomorphism. As noticed by Ozawa in \cite{Ozawa+2011+89+104}, if $b$ is a wq-cocycle on $G'$, then $b\circ \varphi$ is a wq-cocycle on $G$. Hence, if $G$ has property $(TTT)$ and there exists $b$ a proper wq-cocycle on $G'$ (i.e. such that $\left\{g\vert \Vert b(g)\Vert\leq n \right\}$ is relatively compact in $G'$ for any $n\in \N^*$), then any quasi-homomorphism $G\to G'$ has a relatively compact image.
\begin{coro}
Let $\Gamma$ be a lattice in an higher rank almost $\K$-simple algebraic group, then any quasi-homomorphism $\Gamma\to G'$ where $G'$ admits a proper wq-cocycle has relatively compact image.
\end{coro}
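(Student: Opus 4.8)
The plan is to combine the previous corollary with the observation, already recorded in the excerpt, that quasi-homomorphisms pull back wq-cocycles. First I would invoke the preceding corollary: since $\Gamma$ is a lattice in a higher rank almost $\K$-simple algebraic group $G(\K)$, it has property $(TTT)$, meaning every wq-cocycle on $\Gamma$ is bounded. Next, let $\varphi:\Gamma\to G'$ be a quasi-homomorphism and let $b:G'\to H$ be a proper wq-cocycle associated to some map $\pi:G'\to\mathcal{U}(H)$; I want to show $\varphi(\Gamma)$ is relatively compact in $G'$.

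The key step is to check that $b\circ\varphi:\Gamma\to H$ is a wq-cocycle on $\Gamma$. For the defect, write $\varphi(\gamma_1\gamma_2)=c(\gamma_1,\gamma_2)\,\varphi(\gamma_1)\varphi(\gamma_2)$ where $c(\gamma_1,\gamma_2)=\varphi(\gamma_1\gamma_2)\varphi(\gamma_2)^{-1}\varphi(\gamma_1)^{-1}$ ranges over a relatively compact set $Q\subset G'$ by definition of a quasi-homomorphism. Then, using that $b$ is a wq-cocycle on $G'$ with defect $D$ and that $b$ is locally bounded hence bounded on the compact closure $\overline{Q}$, one estimates $\Vert (b\circ\varphi)(\gamma_1\gamma_2)-(b\circ\varphi)(\gamma_1)-(\pi\circ\varphi)(\gamma_1)(b\circ\varphi)(\gamma_2)\Vert$ by repeatedly applying the wq-cocycle relation for $b$ on $G'$ to the three factors $c(\gamma_1,\gamma_2)$, $\varphi(\gamma_1)$, $\varphi(\gamma_2)$: each application costs at most $D$, plus a term $\Vert b(c(\gamma_1,\gamma_2))\Vert\leq\sup_{q\in\overline{Q}}\Vert b(q)\Vert=:M$, giving a uniform bound of the form $3D+M$ (the exact constant is immaterial). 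Borel measurability of $b\circ\varphi$ is clear since $b$ and $\varphi$ are Borel, and local boundedness follows from regularity of $\varphi$ together with local boundedness of $b$. Here $\pi\circ\varphi:\Gamma\to\mathcal{U}(H)$ need not be a representation, which is exactly why one works with wq-cocycles rather than quasi-cocycles.

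By property $(TTT)$ for $\Gamma$, the wq-cocycle $b\circ\varphi$ is bounded, say $\Vert b(\varphi(\gamma))\Vert\leq R$ for all $\gamma\in\Gamma$. Hence $\varphi(\Gamma)\subset\{x\in G'\mid \Vert b(x)\Vert\leq\lceil R\rceil\}$, which is relatively compact in $G'$ precisely because $b$ is a proper wq-cocycle. Therefore $\varphi(\Gamma)$ has relatively compact image, as claimed.

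The only step requiring any care is the defect estimate for $b\circ\varphi$, and the main point there is to notice that one must bound $b$ on the compact-closure of the "multiplier set" $\{c(\gamma_1,\gamma_2)\}$; but this is guaranteed by local boundedness of $b$. There is no real obstacle: the statement is an essentially formal consequence of the preceding corollary and the pull-back remark, both already in the excerpt.
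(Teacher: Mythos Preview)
Your proof is correct and follows exactly the paper's approach: the corollary is stated right after the remark that pulling back a wq-cocycle along a quasi-homomorphism gives a wq-cocycle, and the paper's (implicit) proof is just that remark combined with the preceding corollary that $\Gamma$ has $(TTT)$. One small caveat in your defect estimate: with the paper's definition of quasi-homomorphism the relatively compact set is $\{\varphi(gh)^{-1}\varphi(g)\varphi(h)\}$, so the error term sits on the \emph{right}, $\varphi(\gamma_1\gamma_2)=\varphi(\gamma_1)\varphi(\gamma_2)\,d$ with $d$ in a compact set; this is what makes the bound go through cleanly as $2D+M$, whereas with your left factor $c$ one would pick up terms like $(\pi(c)-I)\,b(\varphi(\gamma_1))$ that are not a priori bounded. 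This is purely a convention issue and does not affect the argument.
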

This applies in particular when $G'$ has Haagerup property, or when $G'$ is hyperbolic. Thus, it gives another proof of \cite[Coro. 4.3]{fujiwara2016quasihomomorphisms}.

\section{Simple Lie groups with infinite center}
In the previous section, we showed that any connected simple with finite center of rank at least $2$ has $(TTT)$. We say that a quasi-homomorphism $\Phi:G\to \R$ is homogeneous if for any $g\in G,n\in \N$, $\Phi(g^n)=n\Phi(g)$. In that case, if $g,h$ commute, then $\Phi(gh)=\Phi(g)+\Phi(h)$. Let $G$ be a connected simple Lie group with infinite center $Z(G)$ and rank at least $2$. Then by \cite[Prop. 6]{Barge1992},  the space of homeogenous quasi-morphism is one dimensional. In particular, a nonzero element of this space is a wq-cocyle (and even a quasi-cocycle) which is unbounded, thus $G$ does not have property $(TTT)$ (and $(TT)$ as well).\medskip

Let $\mathfrak{g}=\mathfrak{k}\oplus \mathfrak{p}$ be a Cartan decomposition of the Lie algebra of $G$ and $\mathfrak{a}$ be a maximal abelian subspace of $\mathfrak{p}$. Let $A,K$ be the analytic subgroups of $G$ with Lie algebras $\mathfrak{a},\mathfrak{k}$ respectively. Then $G=KAK$ as in the finite center case. However, note that $K$ is not compact. Indeed, $Z(G)\subset K$ is an infinite discrete subgroup, but $K/Z(G)$ is compact. 

The following lemma is due to Yves de Cornulier and Mikael de la Salle in an unpublished note. We here reproduce their proof.
\begin{lem}\label{lem:section}
    Let $G$ be a connected simple Lie group with infinite center. There exists a Borel regular section $s:G/Z(G)\to G$ such that $$S=\{s(ghZ(G))(s(gZ(G))s(hZ(G)))^{-1} \mid g,h\in G\}$$ is finite and $s(\exp_{\Ad(G)}(X))=\exp_G(X)$ for any $X\in \mathfrak{a}$.
\end{lem}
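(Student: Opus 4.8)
The plan is to build $s$ from the $KAK$ decomposition, lifting the two (compact-image) ``$K$--factors'' through a carefully chosen Borel section of the covering $K\to K/Z(G)$ with \emph{relatively compact image}, lifting the ``$A$--factor'' canonically, and then to deduce the finiteness of $S$ by testing against the Barge--Ghys quasimorphism. Throughout write $Z=Z(G)$, $Q=\Ad(G)=G/Z$ and $p\colon G\to Q$ for the quotient, which is a covering with kernel $Z$. Since $Z\subset K$ and the Cartan decomposition gives $K\cap\exp_G(\mathfrak p)=\{e\}$, the restriction $p|_A\colon A\to\overline A:=p(A)$ is an isomorphism; write $\widetilde{(\cdot)}$ for its inverse, so that $\widetilde{\exp_Q(X)}=\exp_G(X)$ for $X\in\mathfrak a$ (using $dp\circ\exp_G=\exp_Q$ on $\mathfrak a$). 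Also $\overline K:=p(K)$ is compact, $p|_K\colon K\to\overline K$ is a covering with fibre $Z$, and $G=KAK$ gives $Q=\overline K\,\overline A\,\overline K$.

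First I would produce a Borel section $\sigma\colon\overline K\to K$ of $p|_K$ with $\sigma(e)=e$ and $\overline{\sigma(\overline K)}$ compact: cover $\overline K$ by finitely many open sets carrying continuous local sections of $p|_K$ (compactness), shrink them so each section extends continuously to the closure, and glue along a Borel partition subordinate to the cover, so that $\sigma(\overline K)$ lands in a finite union of compact sets. In addition, fixing once and for all representatives $\overline n_w\in N_{\overline K}(\overline A)$ of the Weyl group $W=N_{\overline K}(\overline A)/Z_{\overline K}(\overline A)$ with $\overline n_{\mathrm{id}}=e$, I would modify $\sigma$ on the finite set $\{\overline n_w,\overline n_w^{-1}:w\in W\}$ so that $\sigma(\overline n_w^{-1})=\sigma(\overline n_w)^{-1}$; this preserves all properties of $\sigma$. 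The $Z$-valued cocycle $c(k,k')=\sigma(kk')^{-1}\sigma(k)\sigma(k')$ then takes values in $\overline{\sigma(\overline K)}^{-1}\,\overline{\sigma(\overline K)}\,\overline{\sigma(\overline K)}\cap Z$, which is finite since $Z$ is discrete.

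Next, I would choose (by measurable selection) a Borel map $q\mapsto(\kappa_1(q),\alpha(q),\kappa_2(q))$ into $\overline K\times\overline{\overline A^+}\times\overline K$ realising the $\overline K\,\overline A\,\overline K$ decomposition with $\alpha(q)$ in the closed dominant chamber, arranged so that on $\overline A$ it sends $\exp_Q(X)$ to $(\overline n_w,\exp_Q(X^+),\overline n_w^{-1})$, where $X^+$ is the dominant representative of $X$ and $w\in W$ satisfies $wX^+=X$. Define $s(q):=\sigma(\kappa_1(q))\,\widetilde{\alpha(q)}\,\sigma(\kappa_2(q))$. Then $p\circ s=\mathrm{id}$, so $s$ is a Borel section; it is regular because on a compact $C\subset Q$ one has $\sigma(\kappa_i(C))\subset\overline{\sigma(\overline K)}$ relatively compact while $\alpha(C)$ lies in a compact subset of $\overline A$ by continuity of the Cartan projection, hence so does $\widetilde{\alpha(C)}$, and a product of relatively compact sets is relatively compact. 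Finally, for $X\in\mathfrak a$ with $X=wX^+$ as above, $s(\exp_Q X)=\sigma(\overline n_w)\exp_G(X^+)\sigma(\overline n_w)^{-1}=\exp_G\!\big(\Ad(\sigma(\overline n_w))X^+\big)=\exp_G(wX^+)=\exp_G(X)$, using that $\Ad(\sigma(\overline n_w))=p(\sigma(\overline n_w))=\overline n_w$ acts on $\mathfrak a$ as $w$; this is the second required property, and $\sigma(e)=e$ handles $X$ with $X=X^+$.

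It remains to show $S$ is finite. Since $Z$ is central one checks that $S=\{\delta(q_1,q_2)^{-1}:q_1,q_2\in Q\}$ where $\delta(q_1,q_2):=s(q_1q_2)^{-1}s(q_1)s(q_2)\in Z$, so it suffices to confine $\delta$ to a finite subset of $Z$. Let $\Phi\colon G\to\R$ be a nonzero homogeneous quasimorphism (\cite[Prop. 6]{Barge1992}), with defect $D<\infty$. As a homogeneous quasimorphism, $\Phi$ is conjugation-invariant and restricts to a homomorphism on each abelian subgroup; being locally bounded, each $t\mapsto\Phi(\exp_G tX)$ is $\R$-linear, and by additivity on the abelian group $A$ the functional $\lambda=\Phi\circ\exp_G|_{\mathfrak a}$ on $\mathfrak a$ is $\R$-linear and $W$-invariant. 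Since $\mathfrak g$ is simple, $W$ acts irreducibly (and nontrivially) on $\mathfrak a$, so $\lambda=0$; in particular $\Phi(\widetilde{\alpha(q)})=0$ for all $q$. With $M:=\sup\{|\Phi(k)|:k\in\overline{\sigma(\overline K)}\}<\infty$, expanding $\Phi(s(q))$ via the defect inequality gives $|\Phi(s(q))|\le 2M+2D$ for all $q$, hence $|\Phi(\delta(q_1,q_2))|\le 6M+8D$ for all $q_1,q_2$. Lastly $\Phi|_Z\neq 0$: otherwise $\Phi$ would be constant on $Z$-cosets and descend to an unbounded homogeneous quasimorphism on $Q$, contradicting that $Q$ — which has finite centre and $\rank_\R Q=\rank_\R G\ge 2$ — has property $(TTT)$ by Theorem \ref{thm:mainthm}. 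Since $Z$ is a finitely generated abelian group of rank one ($G$ being simple with infinite centre) and $\Phi$ vanishes on its torsion subgroup, $\Phi|_Z$ has finite sublevel sets; therefore $\delta$ takes only finitely many values and $S$ is finite.

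The delicate part is the middle construction: producing one Borel section of $KAK$ that is simultaneously regular and rigidly pinned down on $\overline A$ by the chosen Weyl representatives, so that the $\exp$-normalisation holds on the nose. Granting that, the crucial observation is that $\Phi\circ s$ is \emph{bounded} on $Q$, after which the last step is merely ``a bounded subset of a discrete rank-one group is finite''; the only external input is the non-triviality of $\Phi$ on $Z$, which is where higher rank of $G$ enters.
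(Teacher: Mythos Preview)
Your argument is correct in outline but takes a much more elaborate route than the paper's. The paper's proof is essentially two lines: normalise the Barge--Ghys quasimorphism so that $\Phi(Z(G))\subset\Z$, and \emph{define} $s(gZ)$ to be the element of the coset $gZ$ with $\Phi$-value in $[-\tfrac12,\tfrac12]$. The defect inequality then gives $|\Phi(s(xy)(s(y))^{-1}(s(x))^{-1})|\le 2C+\tfrac32$ directly, and finiteness of $S\subset Z$ follows. The $\exp$-normalisation (which the paper does not spell out) drops out because $\Phi$ vanishes on $A$ --- exactly your Weyl-invariance argument --- so $\exp_G(X)$ is already the chosen representative in its coset.

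You instead build $s$ geometrically via a measurable $KAK$ selection and a relatively compact lift $\sigma\colon\overline K\to K$, bringing in $\Phi$ only at the end to bound the defect cocycle on $Z$. This works, and has the virtue that regularity of $s$ and the $\exp$-property are manifest by construction; but it is heavier, needing measurable selection for the $K$-factors, a Borel choice of Weyl element, and the arrangement $\sigma(\overline n_w^{-1})=\sigma(\overline n_w)^{-1}$. That last point is a small wrinkle: if some chosen representative $\overline n_w$ has order~$2$ but admits no order-$2$ lift to $K$ (which can happen, since the obstruction lies in $Z/2Z$), the arrangement fails as stated. This is easily bypassed by defining $s$ on $\overline A$ directly as the canonical lift and using your $KAK$ recipe only off $\overline A$. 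Note also that your appeal to Theorem~\ref{thm:mainthm} for $\Phi|_Z\neq 0$ imports the rank~$\ge 2$ hypothesis; in fact non-triviality of $\Phi$ on the centre is part of the Barge--Ghys construction and holds in any rank, which is what the paper's normalisation implicitly uses.
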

\begin{proof}
    Let $\Phi$ be a Barge-Ghys morphism, normalized by $\Phi(Z(G))\subset \Z$. Since $\Phi$ is homegeneous, we can define $s(gZ(G))$ by $g$ if $\Phi(g)\in \left[-\frac{1}{2},\frac{1}{2}\right]$. Since $\Phi$ is a quasimorphism, there is $C>0$ such that $\vert \Phi(gh)-\Phi(g)-\Phi(h)\vert \leq C$. But we have \begin{align*}
        \vert \Phi(s(xy)s(y)^{-1}s(x)^{-1}) \vert & \leq 2C + \vert \Phi(s(xy))\vert + \vert \Phi(s(y)^{-1})\vert+ \vert\Phi(s(x)^{-1})\vert\\
        & \leq 2C+\frac{3}{2}
    \end{align*}bounded independently of $x,y\in G/Z(G)$. Since $\{s(xy)s(y)^{-1}s(x)^{-1}\}\subset Z(G)$, it is finite.
\end{proof}

Note that $s(1)=1$. We want to study the wq-cocycle on $G$, up to bounded functions. Let $H$ be an Hilbert space and $\pi:G\to \mathcal{U}(H)$ be fixed. Let $$Z_w(G,\pi)=\{b:G\to H \mid b\textrm{ wq-cocycle for }\pi\}$$ and $B_w(G,\pi)$ the subspace of bounded Borel functions. We want to understand the space $H_w(G,\pi)=Z_w(G,\pi)/B_w(G,\pi)$.\medskip

Let $i:Z(G)\to G$ denote the inclusion, right composition by $i$ induces a map $$i_*:H_w(G,\pi)\to H_w(Z(G),\pi).$$ Denote $z_g=gs(gZ(G))^{-1}\in Z(G)$.
\begin{prop}\label{prop:infinitecenter}
    The map $i_*$ is injective and $$i_*\left(H_w(G,\pi)\right)=\left\lbrace [b] \mid \underset{g\in G,z\in Z(G)}{\sup}\Vert \pi(g)b(z)-\pi(z_g)b(z) \Vert<+\infty\right\rbrace.$$
\end{prop}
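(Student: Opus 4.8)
The plan is to analyze the map $i_*$ in two stages: first establish injectivity, then identify the image. Throughout I use the regular Borel section $s$ from Lemma~\ref{lem:section} with its finite cocycle defect set $S$, and the notation $z_g = g\,s(gZ(G))^{-1}\in Z(G)$, so that $g = z_g\, s(gZ(G))$ and $z_1 = 1$.

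\textbf{Injectivity.} Suppose $b\in Z_w(G,\pi)$ with defect $D = \sup_{g,h}\|b(gh)-b(g)-\pi(g)b(h)\|<\infty$, and assume $b|_{Z(G)}$ is bounded, say by $C$; I must show $b$ itself is bounded. The key point is that $G$ is generated by $Z(G)$ together with a single relatively compact subset on which $b$ is automatically bounded (by local boundedness), and one controls word length. Concretely, writing $g = z_g\, s(gZ(G))$ and using the $KAK$ decomposition $G = KAK$ with $K/Z(G)$ compact, every element of $K$ is of the form $z\cdot k_0$ with $z\in Z(G)$ and $k_0$ in a fixed compact set; similarly $A$ contributes a factor $\exp_G(X)$ which via $s(\exp_{\Ad(G)}X) = \exp_G(X)$ is in the image of the section. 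So any $g\in G$ can be written as a bounded-length product of elements of $Z(G)$ and elements of a fixed compact set $Q\subset G$. Since $b$ is locally bounded it is bounded on $Q$, bounded on $Z(G)$ by hypothesis, and the wq-cocycle relation lets one estimate $\|b(g_1\cdots g_n)\|$ by $\sum \|b(g_i)\| + (n-1)D$ (each $\pi(g)$ is unitary). As $n$ is bounded uniformly over $g\in G$, $b$ is bounded. Hence $[b] = 0$ in $H_w(G,\pi)$, proving $i_*$ injective.

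\textbf{Identifying the image.} First, if $b\in Z_w(G,\pi)$ then for $z\in Z(G)$ and $g\in G$, applying the wq-cocycle relation twice to $g z = z_g\, s(gZ(G))\, z$ and to $z_g \cdot g$ (noting $z$ is central so $gz = zg = z_g (s(gZ(G)) z)$ and also $gz = z z_g s(gZ(G))$), one gets
\[
\pi(g)b(z) \approx b(gz) - b(g) \approx b(zg) - b(g) \approx \pi(z)b(g) + b(z) - b(g),
\]
all up to additive error $O(D)$; comparing this with the analogous expansion of $b(z_g s(gZ(G)))$ and using that $b$ restricted to the section values differs from a genuine cocycle by the finite set $S$, one extracts that $\|\pi(g)b(z) - \pi(z_g)b(z)\|$ is bounded uniformly in $g,z$ (the relevant terms telescope, with $b(z_g)$ absorbed since it is a value on $Z(G)$ — but here $b|_{Z(G)}$ need not be bounded, so one must be careful to keep $b(z_g)$ paired correctly rather than discarded). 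This shows $i_*([b])$ lies in the asserted set. Conversely, given a wq-cocycle $c$ on $Z(G)$ with $\sup_{g,z}\|\pi(g)c(z) - \pi(z_g)c(z)\|<\infty$, define $\tilde b(g) := \pi(z_g) c(\text{something})$ — more precisely, the natural guess is $\tilde b(g) = c(z_g)$, using that every $g$ has its central part $z_g\in Z(G)$. One checks $\tilde b$ is Borel (since $s$ is Borel and $c$ is Borel), locally bounded (since $g\mapsto z_g$ is regular as $s$ is), restricts to $c$ on $Z(G)$ up to a bounded error (because $z_z = z\, s(zZ(G))^{-1}$ and $s(zZ(G))$ ranges over a bounded set, so $z_z = z\cdot(\text{bounded})$, giving $c(z_z) \approx \pi(z)c(\text{bdd}) + c(z)$, bounded difference from $c(z)$ after a further normalization), and is a wq-cocycle for $\pi$: the defect computation reduces to comparing $z_{gh}$ with $z_g \cdot (\text{conjugate of } z_h)$ via the finite set $S$, and the hypothesis $\sup\|\pi(g)c(z)-\pi(z_g)c(z)\|<\infty$ is exactly what is needed to turn $\pi(g)c(z_h)$ into $\pi(z_g)c(z_h)$ at bounded cost. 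Then $i_*([\tilde b]) = [c]$.

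\textbf{Main obstacle.} The delicate point is the converse direction: constructing the lift $\tilde b$ and verifying it is a genuine wq-cocycle. The subtlety is that $z_{gh} \ne z_g z_h$ in general — the failure is governed by the section's cocycle $S$ and by the interaction $s(gZ(G))\, z_h\, s(gZ(G))^{-1}$, which although equal to $z_h$ by centrality still forces $\pi$-twisting by $\pi(s(gZ(G)))$ rather than $\pi(g) = \pi(z_g s(gZ(G)))$. Reconciling the $\pi(g)$-twist with the $\pi(z_g)$-twist is precisely where the hypothesis in the statement enters, and one has to apply it to $c$ rather than to arbitrary vectors, so the bookkeeping of which central element multiplies which $\pi$ must be done carefully. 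I expect this to be the only step requiring genuine care; injectivity and the forward inclusion are routine telescoping estimates once the $KAK$ generation bound and the finiteness of $S$ are in hand.
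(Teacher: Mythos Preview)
Your injectivity argument has a genuine gap. You claim that via $G=KAK$ every $g\in G$ is a bounded-length product of elements of $Z(G)$ and of a fixed \emph{compact} set $Q$. This is false: the $A$-factor is noncompact and is not contained in $Z(G)$. Noting that $A\subset s(G/Z(G))$ does not help by itself --- it only tells you that $b|_A$ factors through $b\circ s$, and you have no control on $b\circ s$ from local boundedness alone. The missing idea is precisely the main theorem of the paper: $G/Z(G)$ is a simple Lie group with finite center and rank $\geq 2$, hence has property $(TTT)$. One checks (using finiteness of $S$) that $b\circ s$ is a wq-cocycle on $G/Z(G)$, so $(TTT)$ forces $b\circ s$ to be bounded; then $g=z_g\,s(gZ(G))$ gives $\|b(g)\|\leq D+\|b\circ s\|_\infty+\|b|_{Z(G)}\|_\infty$. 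Your word-length scheme cannot replace this, because no amount of decomposing $A$ produces a compact generating set for it inside $G$ without already using $Z(G)$-translates, and $A\cap Z(G)=\{1\}$.

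The same omission recurs in your forward inclusion. After telescoping you will be left with terms of the form $\|b(s(gZ(G)))\|$ (or equivalently differences like $\pi(z)b(s(gZ(G)))-b(s(gZ(G)))$), which you cannot bound without knowing $b\circ s$ is bounded. The paper's estimate is $\|\pi(g)b(z)-\pi(z_g)b(z)\|\leq 4D'+2\|b(s(gZ(G)))\|$, and the right-hand side is finite only because $(TTT)$ for $G/Z(G)$ bounds $b\circ s$. Your sketch ``the relevant terms telescope'' does not produce such a bound, and your caveat that $b|_{Z(G)}$ need not be bounded is beside the point: it is $b\circ s$ whose boundedness is at stake.

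Your reverse inclusion is correct and matches the paper: defining $\tilde b(g)=c(z_g)$, the defect computation reduces to $\|c(z_{gh})-c(z_gz_h)\|+D+C$ with $(z_gz_h)^{-1}z_{gh}\in S$ finite, and the hypothesis $\sup\|\pi(g)c(z)-\pi(z_g)c(z)\|<\infty$ is exactly what converts $\pi(g)c(z_h)$ to $\pi(z_g)c(z_h)$. Note also that $s(1)=1$, so $z_z=z$ for $z\in Z(G)$ and $\tilde b|_{Z(G)}=c$ on the nose; your worry about ``$z_z=z\cdot(\text{bounded})$'' is unnecessary.
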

\begin{proof}
    Let $b$ be a wq-cocycle with defect $D$ such that $i_*[b]=0$, then $b\circ i$ is bounded. The map $b\circ s$ is also a wq-cocycle on $G/Z(G)$. Indeed, since $s$ is Borel regular, $b\circ s$ is Borel and locally bounded. Furthermore, if $g,h\in G/Z(G)$, then \begin{align*}
        \Vert b(s(gh))-b(s(g))-\pi(s(g))b(s(h))\Vert & \leq \Vert b(s(gh))-b(s(g)s(h))\Vert+D\\
        & \leq \Vert b((s(g)s(h))^{-1}s(gh))\Vert+2D
    \end{align*}which is bounded in $g,h$ since $S$ is finite. But then since by Theorem \ref{thm:mainthmP}, $b\circ s$ is bounded.\\
    Let $g\in G$, then $g=z_gs(gZ(G))$. Thus $$\Vert b(g)\Vert \leq D+\Vert b(s(gZ(G))\Vert+\Vert b(z_g)\Vert$$so $b$ is bounded and $[b]=0$.\medskip

    Let $\Tilde{b}$ be a wq-cocycle on $Z(G)$ with defect $D$. If there exists a wq-cocycle $b$ on $G$ with defect $D'$ such that $i_*[b]=[\Tilde{b}]$,
    Thus, for any $g\in G,z\in Z(G)$, using that $z,z_g$ commute with $G$, \begin{align*}
        \Vert \pi(g)b(z)-\pi(z_g)b(z)\Vert & \leq \Vert \pi(g)b(z)+b(z_gs(gZ(G)))-b(gz)\\
        & \phantom{{}\leq \Vert} -b(z_gs(gZ(G)))+b(z_g)+\pi(z_g)b(s(gZ(G)))\\
        & \phantom{{}\leq \Vert} +b(gz)-b(z_g)-\pi(z_g)b(zs(gZ(G)))\\
        & \phantom{{}\leq \Vert} +\pi(z_g)b(zs(gZ(G)))-\pi(z_g)b(z)-\pi(z_g)\pi(z)b(s(gZ(G)))\\
        & \phantom{{}\leq \Vert}- \pi(z_g)b(s(gZ(G)))+\pi(z_g)\pi(z)b(s(gZ(G))) \Vert\\
        & \leq 4D'+ 2\Vert b(s(gZ(G)))\Vert.
    \end{align*}But since $b\circ s$ is bounded, we get that $$\underset{g\in G,z\in Z(G)}{\sup}\Vert \pi(g)b(z)-\pi(z_g)b(z)\Vert<+\infty.$$Finally since $b\vert_{Z(G)}-\Tilde{b}$ is bounded by assumption, we get the necessary condition\begin{equation}\label{eq:condition}
        \underset{g\in G,z\in Z(G)}{\sup}\Vert \pi(g)\Tilde{b}(z)-\pi(z_g)\Tilde{b}(z) \Vert<+\infty
    \end{equation}
    Finally, we show that condition \eqref{eq:condition} is sufficient. Let $C$ be the supremum. Define $b(g)=\Tilde{b}(gs(gZ(G))^{-1})=\Tilde{b}(z_g)$ which is Borel and locally bounded. Then $b$ is a wq-cocycle. Indeed, if $g,h\in G$, then \begin{align*}
    \Vert b(gh)-b(g)-\pi(g)b(h)\Vert & = \Vert \Tilde{b}(z_{gh})-\Tilde{b}(z_g)-\pi(g)\Tilde{b}(z_h)\Vert \\
    & \leq \Vert \Tilde{b}(z_{gh})-\Tilde{b}(z_g)-\pi(z_g)\Tilde{b}(z_h)\Vert+\Vert  \pi(g)\Tilde{b}(z)-\pi(z_g)\Tilde{b}(z)\Vert\\
    & \leq \Vert  \Tilde{b}(z_{gh})-\Tilde{b}(z_gz_h)\Vert + D + C\\
    & \leq \Vert  \Tilde{b}((z_gz_h)^{-1}z_{gh})\Vert + 2D+C.
    \end{align*}But $(s(gZ(G))s(hZ(G)))^{-1}s(ghz(G))=(z_gz_h)^{-1}z_{gh}$ so since $S$ is finite, $$ \underset{g,h\in G}{\sup}\Vert  \Tilde{b}((z_gz_h)^{-1}z_{gh})\Vert<+\infty.$$
    Finally, for any $z\in Z(G)$, $b(z)=\Tilde{b}(zs(1)^{-1})=\Tilde{b}(z)$ so that $i_*[b]=[\Tilde{b}]$.
\end{proof}
\begin{remark}In particular, any wq-cocycle on $Z(G)$ associated with $\pi:Z(G)\to \mathcal{U}(H)$ induces a wq-cocycle on $G$, for $\pi':G\to \mathcal{U}(H)$ defined by $$\pi'(g)=\pi(gs(gZ(G))^{-1})=\pi(z_g).$$
Furthermore, any wq-cocycle on $G$ is bounded on $A$, since $A\subset s(G/Z(G))$.
\end{remark}

%Mq: $b$ est borné sur $A$ et sur les commutateurs (pareil ?)
%Peut-on comprendre $H_w(Z(G),\pi)$?

%\nocite{*}
\addcontentsline{toc}{section}{Bibliography}
\bibliographystyle{alpha}
\bibliography{Ref}

\begin{thebibliography}{BdlHV08}

\bibitem[BdlHV08]{bekka_de_la_harpe_valette_2008}
B.~Bekka, P.~de~la Harpe, and A.~Valette.
\newblock {\em Kazhdan's Property (T)}.
\newblock New Mathematical Monographs. Cambridge University Press, 2008.

\bibitem[BG92]{Barge1992}
J.~Barge and E.~Ghys.
\newblock Cocycles d'{E}uler et de {M}aslov.
\newblock {\em Mathematische Annalen}, 294(2):235--266, 1992.

\bibitem[BM99]{burgermonod}
M.~Burger and N.~Monod.
\newblock Bounded cohomology of lattices in higher rank {L}ie groups.
\newblock {\em Journal of the European Mathematical Society}, 1:199--235, 1999.

\bibitem[BO08]{brownc}
N.P. Brown and N.~Ozawa.
\newblock {\em C*-algebras and Finite-dimensional Approximations}.
\newblock Graduate studies in mathematics. American Mathematical Soc., 2008.

\bibitem[CLMS21]{coine}
C.~Coine, C.~Le~Merdy, and F.~Sukochev.
\newblock When do triple operator integrals take value in the trace class?
\newblock {\em Annales de l'Institut Fourier}, 71(4):1393--1448, 2021.

\bibitem[Coi17]{coine-these}
C.~Coine.
\newblock {\em {Continuous linear and bilinear {S}chur multipliers and applications to perturbation theory}}.
\newblock Theses, {Universit{\'e} Bourgogne Franche-Comt{\'e}}, June 2017.

\bibitem[dC09]{cornulier}
Y.~de~Cornulier.
\newblock On lengths on semisimple groups.
\newblock {\em Journal of Topology and Analysis}, 01(02):113--121, 2009.

\bibitem[DU77]{diestel1977vector}
J.~Diestel and J.J. Uhl.
\newblock {\em Vector Measures}.
\newblock Mathematical surveys and monographs. American Mathematical Society, 1977.

\bibitem[FK16]{fujiwara2016quasihomomorphisms}
K.~Fujiwara and M.~Kapovich.
\newblock On quasihomomorphisms with noncommutative targets.
\newblock {\em Geometric and Functional Analysis}, 26:478--519, 2016.

\bibitem[Kec12]{kechris2012classical}
A.~Kechris.
\newblock {\em Classical Descriptive Set Theory}.
\newblock Graduate Texts in Mathematics. Springer New York, 2012.

\bibitem[Mac52]{mackey}
G.W. Mackey.
\newblock Induced representations of locally compact groups i.
\newblock {\em Annals of Mathematics}, 55(1):101--139, 1952.

\bibitem[Mar91]{margulis1991discrete}
G.A. Margulis.
\newblock {\em Discrete Subgroups of Semisimple Lie Groups}.
\newblock Number vol.~17 in 3. Folge. Springer, 1991.

\bibitem[Neu03]{neuhauser}
M.~Neuhauser.
\newblock {Kazhdan's Property T for the Symplectic Group over a Ring}.
\newblock {\em Bulletin of the Belgian Mathematical Society - Simon Stevin}, 10(4):537 -- 550, 2003.

\bibitem[Oza11]{Ozawa+2011+89+104}
N.~Ozawa.
\newblock Quasi-homomorphism rigidity with non-commutative targets.
\newblock {\em Journal für die reine und angewandte Mathematik}, 2011(655):89--104, 2011.

\bibitem[Pis86]{pisier1986factorization}
G.~Pisier.
\newblock {\em Factorization of {L}inear Operators and {G}eometry of {B}anach {S}paces}.
\newblock Regional conference series in mathematics. Conference Board of the Mathematical Sciences, 1986.

\bibitem[Pis16]{Pisier_2016}
G.~Pisier.
\newblock {\em Martingales in {B}anach Spaces}.
\newblock Cambridge Studies in Advanced Mathematics. Cambridge University Press, 2016.

\bibitem[Spr04]{spronk}
N.~Spronk.
\newblock Measurable {S}chur multipliers and completely bounded multipliers of the {F}ourier algebras.
\newblock {\em Proceedings of the London Mathematical Society}, 89(1):161--192, 2004.

\end{thebibliography}
\end{document}